\newcommand{\dx}{\textnormal{d}{x}}
\newcommand{\norm}[1]{\left\| #1 \right\|}
\newcommand{\R}{\mathbb R}
\newcommand{\T}{\mathbb T}
\newcommand{\abs}[1]{\left\lvert #1 \right\rvert}
\numberwithin{equation}{section}
\newtheorem{satz}{Proposition}[section]
\newtheorem{remark}[satz]{Remark} 
\newtheorem{theorem}{Theorem}
\newtheorem{lemma}[satz]{Lemma}
\title{Asymptotic models for the evolution of a circular biofilm}
\author[D. Alonso-Or\'{a}n]{Diego Alonso-Or\'{a}n}
\email{dalonsoo@ull.edu.es}
\address{Departamento de An\'{a}lisis Matem\'{a}tico y Instituto de Matem\'{a}ticas y Aplicaciones (IMAULL), Universidad de La Laguna C/. Astrof\'{i}sico Francisco S\'{a}nchez s/n, 38200 - La Laguna, Spain.}
\author[A. Carpio]{Ana Carpio}
\email{ana\_carpio@mat.ucm.es}
\address{Departamento de An\'{a}lisis Matem\'{a}tico y  Matem\'{a}tica  Aplicacada, Universidad Complutense de Madrid, Plaza de Ciencias 3, 28040 - Madrid, Spain}
\author[R. Granero-Belinch\'{o}n]{Rafael Granero-Belinch\'{o}n}
\email{rafael.granero@unican.es}
\address{Departamento  de  Matem\'aticas,  Estad\'istica  y  Computaci\'on,  Universidad  de Cantabria,  Avda.  Los  Castros  s/n,  Santander,  Spain.}
\begin{document}
\begin{abstract}
We study a class of lubrication-type equations modeling the spread of thin poroelastic biofilms at air/agar interfaces. Starting from a biofilm slab model, we perform a formal multi-scale asymptotic expansion to derive a reduced nonlinear evolution equation with time-dependent coefficients. First, we establish a local-in-time existence result as well as a continuation criteria. Moreover, under suitable assumptions on the radius of the biolfilm, we show the existence of global in time solutions. We conclude with some numerical simulations illustrating the emergence of instabilities and potential singularities in finite time. 
\end{abstract}

\subjclass[2010]{}
\keywords{}

%\ccode{2010 AMS Subject Classification: 35455, 35B41, 92C17}

\maketitle

\allowdisplaybreaks

\section{Introduction}

In Nature, bacteria often gather  on moist surfaces to form communities called biofilms \cite{flemming}.
In the biofilm habitat, bacteria are immersed in a self-produced polymeric matrix that shelters them from external aggressions such as antibiotics, chemicals and flows. Biofilm dynamics and structure change noticeably with the environmental conditions. While biofilms growing in flows tend to form filamentary structures \cite{stone}, biofilms growing on interfaces with air spread on the surface displaying wrinkled patterns \cite{seminara}. There is a large literature on biofilm  modelling at different scales, ranging from microscopic agent based computational  approaches \cite{allen, ccp, espeso, laspidou, storck} to continuous models of macroscopic behavior \cite{cogan, kapellos}. 
We focus here on continuous mathematical models for biofilm spread on  air/agar interfaces \cite{seminara}.  A typical approach considers the biofilm as a mixture of two phases: a liquid phase
(water, dissolved chemicals and so on) modeled as an inviscid fluid and a biomass phase modeled either as a viscous fluid \cite{seminara} or an elastic material \cite{amm}, depending on whether the biofilm displays more fluid-like or solid-like behavior. The resulting system is formulated in the biofilm 
occupied region, which moves with time. 
For thin biofilms, one can implement a lubrication type approximation to obtain an equation for the motion of the biofilm boundary, coupled to the equations of conservation of mass and momentum inside the biofilm \cite{seminara, entropy}. 
So far, well-posedness results for these models assume the  motion of the biofilm boundary known and establish existence and stability estimates for solutions in quasi-static  regimes \cite{na} or for specific types of boundary deformations with time \cite{amm}. Attempts have been made to analyze the motion of the boundary.
Under additional conditions on the geometry and the parametric regime, a single equation for the evolution of  the biofilm boundary follows from explicit asymptotic profiles for volume fractions and velocities inside the biofilm  obtained by a perturbation analysis \cite{seminara, entropy}.
For dense circular biofilms in which the biomass phase is considered elastic, the height $h(r,t)$ of a biofilm of radius $R(t)$ evolves according to the dimensionless
equation \cite{entropy}
\begin{equation}\label{biofilm_orig}
h_t - \frac{KR}{rR_0}(rh_{rt}h^3)_r-\frac{3KR}{2rR_0}(rh_rh^2h_t)_r 
- \frac{KR_t}{rR_0}(rh_rh^3)_r - {\delta^2 \over r} (r h^2 h_r)_r
= h. \end{equation}
The parameter $K$ encodes the rheological properties
of the different phases. Assuming $\delta \sim{h_0 \over R_0}  \ll 1$, that is, a thin film whose average height $h_0$ remains much smaller that its average radius $R_0$ during the computation time, we can neglect the
last term. Then, the change $h = e^t \tilde h$ leads to
\begin{equation}\label{biofilm}
h_t-\frac{KR}{rR_0}e^{3t}(rh_{rt}h^3)_r-\frac{3KR}{2rR_0}e^{3t}(rh_rh^2h_t)_r-(\frac{5KR}{2rR_0}
+\frac{KR_t}{rR_0})e^{3t}(rh_rh^3)_r = 0,
\end{equation}
dropping the tilde symbol for ease of notation.
If we work with biofilm slabs instead of circular patches, equation (\ref{biofilm_orig}) is replaced by
\begin{equation}\label{biofilm_orig_x}
h_t - \frac{KR}{R_0}(h_{xt}h^3)_x-\frac{3KR}{2R_0}(h_xh^2h_t)_x 
- \frac{KR_t}{R_0}(h_xh^3)_x - \delta^2 (h^2 h_x)_x
= h, 
\end{equation}
or 
\begin{equation}\label{biofilm_x}
h_t-\frac{KR}{R_0}e^{3t}(h_{xt}h^3)_x-\frac{3KR}{2R_0}e^{3t}(h_x h^2h_t)_x-(\frac{5KR}{2R_0}
+\frac{KR_t}{R_0})e^{3t}(h_xh^3)_x - \delta^2 e^{2t}(h^2 h_x)_x= 0.
\end{equation}
 In principle, we could impose a boundary condition at the moving border of the biofilm occupied region $r = R(t)$ or $|x| = R(t)$, considered as triple point where three boundaries meet: air, biofilm and surface. Alternatively, one can follow  \cite{degennes} and insert `precursor layers' at the edges, that is, we assume that the spreading biofilm is always surrounded by a region at which $h = \eta$, for a small positive constant $\eta$.

Lubrication type approximations have been long used to derive equations for the moving boundaries of a variety of thin films, ameneable to analysis and numerical approximation. For fluid-like biofilms in 
which the biomass phase is considered a viscous fluid, the height $h(r,t)$ of a biofilm of radius $R(t)$ is described by simpler equations \cite{seminara}:
\begin{equation}\label{biofilmv_orig}
h_t - \frac{KR}{rR_0}(rh_rh^3)_r - {\delta^2 \over r} (r h^2 h_r)_r = h, 
\end{equation}
which can be approximated for small $\delta$ by expressions of the form 
\begin{equation}\label{biofilmv}
h_t- e^{3t} \frac{KR}{rR_0}(rh_rh^3)_r = 0.
\end{equation}
These are porous media type equations, easier to handle, except for the fact that the radius or length
$R(t)$ is usually unknown too.
Seeking self-similar solutions we find a combination of $h$ and $R$ that solves (\ref{biofilmv_orig})
with $\delta=0$:
\begin{eqnarray} \label{selfsimilarv}
h =   {e^{t}  \over (R/R_0)^2} \left( 1-{3 \over 2} {r^2 \over R^2}\right)^{1/3},  \quad
R = R_0 \left(1 + {7 \over 3}  K (e^{3t} -1)\right)^{1/7}.
\end{eqnarray}
%n slabs, 
%\begin{equation}\label{biofilmv_x}
%h_t -  KR (h_xh^3)_x - \delta^2  (h^2 h_x)_x
%= h.
%\end{equation}
Introducing expressions of this form (after replacing ${7 \over 3}$ by ${35 \over 6}$) in (\ref{biofilm_orig}), the remainder decays to zero as $t$ grows, see \cite{entropy}. Thus, these expressions provide asymptotic profiles  to which some initial data evolve under (\ref{biofilm}) for large times, as shown by numerical solutions. However, they are not solutions.
%A simplified version of this equation
%\begin{equation}\label{biofilm_red}
%h_t- \frac{5KR}{2rR_0} e^{3t}(rh_rh^3)_r = 0,
%\end{equation}
%admits the self-similar solution
%\begin{eqnarray} \label{selfsimilar}
%h = e^{t} {R_0^2 \over R^2} \left( 1-{3 \over 2} {r^2 \over R^2}\right)^{1/3},  \quad
%R = R_0 \left(1 + {7 \over 3}{5 \over 2}  K (e^{3t} -1)\right)^{1/7}.
%\end{eqnarray}
%Inserting this expression in (\ref{biofilm}) the additional terms decay fast as $t$ grows, thus, 
%we can consider it as a potential asymptotic profile for large times.

Classical lubrication models for material thin fluid films of different rheology (newtonian, non-newtonian, surfactants) read
\begin{eqnarray}
h_t + \tau h h_x + \sigma (h^3 h_{xxx})_x =0 \quad \mbox{\rm for constant shear stress and surface tension}, \\
h_t - \rho g (h^3 h_x)_x + \sigma (h^3 h_{xxx})_x =0 \quad \mbox{\rm for constant surface tension and gravity},
\label{instability} 
\end{eqnarray}
and comprise also variants  including thermal effects, evaporation, condensation, van der Waals forces, falling motion, and so on. In contrast with the biofilm models, the coefficients that appear in the resulting equations are always time independent and the surface  covered is fixed: the only unknown is the biofilm height $h$. Equation (\ref{instability}) is well known to exhibit rupture phenomena (Raleigh-Taylor instability). Linear stability analyses of perturbed constant profiles  show that local finite time vanishing (rupture) of the film is possible for some parameter ranges and staring conditions (wavy profiles, perturbed constants). Well-posedness results for these models are established for instance in \cite{bertozzi}.

Equation (\ref{biofilm_x})  presents novel features that make its study particularly challenging: the presence of spatial derivatives of the time derivatives and the presence of time dependent coefficients (in principle part of the solution, as illustrated by the computation of the self-similar profiles). Here, we will focus on constructing solutions for $\delta=0$ and given functions $R(t)$. Although equations involving operators of the form \( u_t - A u_t \) have been studied previously~\cite{ambrose}, existing results typically assume time-independent coefficients, and therefore do not directly apply to our setting. Instead, we will try to solve the initial value problem taking perturbed constants as initial data. Notice that  (\ref{biofilm}) and (\ref{biofilm_x}) admit any constant as a solution. However, in which functional spaces these problems may be well posed is a priori unclear. \medskip

In this framework, the purpose of this article is three-fold. First, by means of a formal multi-scale asymptotic expansion \cite{alonso2024, cheng2019, granero2019} we reduce equation \eqref{biofilm_x} into a cascade of linear partial differential equations which can be closed up to some order of precision. More precisely, the reduced equation
takes the form
\begin{equation}\label{model2:intro}
	f_t-\mathcal{L}_tf=\mathcal{Q}_t\frac{3KR}{2R_0}e^{3t}(f_x\mathcal{L}_tf)_x+3\mathcal{Q}_t\frac{KR}{R_0}e^{3t}(\mathcal{L}_tf_xf)_x
	+3\mathcal{Q}_t(\frac{5KR}{2R_0}+\frac{KR_t}{R_0})e^{3t}(f_{x}f)_x,
\end{equation}
which we call the f-model, where the time dependent operators $\mathcal{Q}_t$ and $\mathcal{L}_t$ are defined in terms of Fourier multipliers.  Asymptotic and multi-scale expansion methods provide a formal strategy to derive reduced models that approximate the full dynamics in specific parameter regimes. These asymptotic systems retain the leading-order behavior while offering significant simplifications, making them more amenable to mathematical analysis and numerical simulation. Classical applications include the asymptotic derivation of the Saint-Venant, Green--Naghdi, and Boussinesq equations from the incompressible Euler system, to model the evolution of shallow water flows, \cite{lannes}. Beyond shallow water theory, asymptotic expansions have also been employed to derive models for the rotational shear and magnetic confinement in the solar tachocline~\cite{alonso2021}, as well as reduced fluid models for the evolution of cold and magnetized plasmas; see, e.g.,~\cite{alonso2024}. \medskip

The second purpose of this work is the study of several analytical properties of \eqref{model2:intro}. Specifically, the results shown in this paper can be summarized as follows:
\begin{itemize}
\item For $R(t) = e^{\alpha t}$, $\alpha \in \R$, we establish the local-in-time existence and uniqueness of solutions to \eqref{model2:intro} for initial data in suitable Sobolev spaces (Theorem \ref{result:th}). Moreover, we also establish a blow-up criteria to characterize the presence of possible singularities (Theorem \ref{cont:criterion}).
\item For $\alpha>-2$, we can show the existence of global-in-time existence of solutions to \eqref{model2:intro} in Sobolev spaces (Theorem \ref{theorem:global}).
\end{itemize}\medskip

To conclude, we perform numerical simulations to illustrate the dynamical behavior of the models introduced in this work. We explore the formation of self-similar profiles and the interaction between vertical growth and lateral spreading under different choices of the scaling factor $R(t)$. The numerical tests highlight the stability and instability mechanisms predicted by the theory, including the emergence of flat regions, wavy structures, and potential singular behaviors. In particular, we investigate the autonomous $f$-model and support, through simulations, the linear instability of constant solutions when $\delta = 0$ and possible finite time blow-up.

\subsection*{Plan of the paper} 
The structure of the paper is as follows. In Section~\ref{sec:der} ,we present the asymptotic derivation of the $f$-model studied in this paper by means of an multi-scale expansion. Section~\ref{notation:aux} is devoted to fixing the notation and presenting several auxiliary lemmas concerning the boundedness properties the previous Fourier multiplier operators. In Section~\ref{sec:3}, we establish the local-in-time existence and uniqueness of solutions to the \( f \)-model~\eqref{model2} for suitable Sobolev initial data, and we provide blow-up criteria characterizing the breakdown of smooth solutions.  In Section~\ref{sec:4}, we extend the local existence theory for the \( f \)-model~\eqref{model2} by proving global-in-time result for certain values of the parameter \( \alpha \). To conclude in Section~\ref{sec:5} we provide some numerical simulations illustrating the emergence of instabilities and potential singularities in finite time.

\subsection*{Acknowledgments}
 D.A-O has been partially supported by RYC2023-045563-I (MICIU/AEI/1\\ 
 0.13039/501100011033 and FSE+) and the project PID2023-148028NB-100. D.A-O and R. G-B are also supported by the project “An\'alisis Matem\'atico Aplicado y Ecuaciones Diferenciales” Grant PID2022- 141187NB-I00 funded by MCIN/ AEI and acronym “AMAED”. 
 A.C. is supported by the projects “Métodos y modelos matemáticos para aplicaciones biomédicas”, Grant PID2020-112796RB-C21with acronym “MATHBIOMED”, and “Movimiento celular colectivo y matemáticas aplicadas a la biomedicina”, Grant PID2024-155528OB-C21 with acronym “MATHCELL”, 
funded by MCIN/ AEI.
% This paper is part of the project PID2022-141187NB-I00 with acronym \emph{AMAED} funded by %MICIU/AEI /10.13039/501100011033 and by FEDER, UE/AEI /10.13039/501100011033 / FEDER, UE.

\section{Derivation of the f-model for the biolfim equation}\label{sec:der}
We begin by deriving reduced equations describing the evolution of perturbations around constant solutions of~\eqref{biofilm_x} with \( \delta = 0 \). Let us assume that 
$$
h(x,t)=1+\varepsilon g(x,t)
$$
and $0<\varepsilon\ll 1$. Then, retaining $O(\varepsilon^2)$ terms, \eqref{biofilm_x} reads
\begin{equation}\label{biofilm3}
g_t-\frac{KR}{R_0}e^{3t}(g_{xt}(1+3\varepsilon g))_x-\frac{3KR}{2R_0}e^{3t}(g_xg_t)_x-(\frac{5KR}{2R_0}+\frac{KR_t}{R_0})e^{3t}(g_x(1+3\varepsilon g))_x=0.
\end{equation}
We introduce the asymptotic expansion
\begin{equation*}
	g(z,t) = \sum_{\ell=0}^\infty \varepsilon^\ell g^{(\ell)}(z,t),
\end{equation*}
where \( \varepsilon \ll 1 \) is a small parameter. Substituting this Ansatz into the \eqref{biofilm3} we obtain a hierarchy (or cascade) of linear partial differential equations for the successive profiles \( g^{(\ell)} \), namely
\begin{multline}\label{biofilm4}
g^{(\ell)}_t-\frac{KR}{R_0}e^{3t}g^{(\ell)}_{xxt}-(\frac{5KR}{2R_0}+\frac{KR_t}{R_0})e^{3t}g^{(\ell)}_{xx}=\frac{3KR}{2R_0}e^{3t}\left(\sum_{j=0}^{\ell-1}g^{(j)}_xg^{(\ell-j-1)}_t\right)_x+3\frac{KR}{R_0}e^{3t}\left(\sum_{j=0}^{\ell-1}g^{(j)}_{xt}g^{(\ell-j-1)}\right)_x\\
 +3(\frac{5KR}{2R_0}+\frac{KR_t}{R_0})e^{3t}\left(\sum_{j=0}^{\ell-1}g^{(j)}_{x}g^{(\ell-j-1)}\right)_x=0.
\end{multline}
In particular, $g^{(0)}(x,t)$ verifies that
$$
g^{(0)}_t-\frac{KR}{R_0}e^{3t}g^{(0)}_{xxt}-(\frac{5KR}{3R_0}+\frac{KR_t}{R_0})e^{3t}g^{(0)}_{xx}=0.
$$
We observe that we can equivalently write
$$
g^{(0)}_t=\mathcal{L}_tg^{(0)},
$$
where the operator $\mathcal{L}_t$ is defined as the following Fourier multiplier
$$
\widehat{\mathcal{L}_t G}(n)=-\frac{(\frac{5KR}{2R_0}+\frac{KR_t}{R_0})e^{3t}n^2}{1+\frac{KR}{R_0}e^{3t}n^2}\widehat{G}(n).
$$
Similarly, we infer that $g^{(1)}(x,t)$  satisfies
\begin{multline*}
g^{(1)}_t-\frac{KR}{R_0}e^{3t}g^{(1)}_{xxt}-(\frac{5KR}{2R_0}+\frac{KR_t}{R_0})e^{3t}g^{(1)}_{xx}\\=\frac{3KR}{2R_0}e^{3t}(g^{(0)}_x\mathcal{L}_tg^{(0)})_x+3\frac{KR}{R_0}e^{3t}(\mathcal{L}_tg^{(0)}_xg^{(0)})_x
+3(\frac{5KR}{2R_0}+\frac{KR_t}{R_0})e^{3t}(g^{(0)}_{x}g^{(0)})_x.
\end{multline*}
Hence, defining 
$$
f(x,t)=g^{(0)}(x,t)+\varepsilon g^{(1)}(x,t),
$$
we conclude that, up to $O(\varepsilon^2)$ corrections, $f(x,t)$ solves 
\begin{multline}\label{model}
f_t-\frac{KR}{R_0}e^{3t}f_{xxt}-(\frac{5KR}{2R_0}+\frac{KR_t}{R_0})e^{3t}f_{xx}\\=\frac{3KR}{2R_0}e^{3t}(f_x\mathcal{L}_tf)_x+3\frac{KR}{R_0}e^{3t}(\mathcal{L}_tf_xf)_x
+3(\frac{5KR}{2R_0}+\frac{KR_t}{R_0})e^{3t}(f_{x}f)_x.
\end{multline}
Moreover, defining the operator $\mathcal{Q}_{t}$ via the Fourier multiplier
$$
\widehat{\mathcal{Q}_t G}(n)=\frac{1}{1+\frac{KR}{R_0}e^{3t}n^2}\widehat{G}(n),
$$
and rearranging terms, equation~\eqref{model} can be equivalently written as
\begin{equation}\label{model2}
	f_t-\mathcal{L}_tf=\mathcal{Q}_t\frac{3KR}{2R_0}e^{3t}(f_x\mathcal{L}_tf)_x+3\mathcal{Q}_t\frac{KR}{R_0}e^{3t}(\mathcal{L}_tf_xf)_x
	+3\mathcal{Q}_t(\frac{5KR}{2R_0}+\frac{KR_t}{R_0})e^{3t}(f_{x}f)_x.
\end{equation}
We refer to equation~\eqref{model2} as the non-autonomous \( f \)-model associated with the biofilm equation~\eqref{biofilm_x}. A further simplified autonomous model can be obtained if we prescribed 
$$
\frac{K}{R_0}=1\text{ and }R(t)=e^{-3t}.
$$
Indeed, with this particular choice, we obtain that $\mathcal{L}_t = \mathcal{L}$ and $\mathcal{Q}_t = \mathcal{Q}$, where the operators $\mathcal{L}$ and $\mathcal{Q}$ are defined in Fourier space by
\[
\widehat{\mathcal{L} G}(n) = \frac{n^2}{2(1 + n^2)} \widehat{G}(n), \qquad \widehat{\mathcal{Q} G}(n) = \frac{1}{1 + n^2} \widehat{G}(n).
\]
Accordingly, equations~(\ref{model}) and~(\ref{model2}) reduce to
\begin{equation}\label{model:aut}
f_t-f_{xxt}+\frac{1}{2}f_{xx}=\frac{3}{2}(f_x\mathcal{L}f)_x+3(\mathcal{L}f_xf)_x
-\frac{3}{2}(f_{x}f)_x,
\end{equation}
and
\begin{equation}\label{model2:aut}
f_t-\mathcal{L}f=\frac{3}{2}\mathcal{Q}(f_x\mathcal{L}f)_x+3\mathcal{Q}(\mathcal{L}f_xf)_x
+\frac{3}{2}\mathcal{L}(f^2).
\end{equation}
To maintain consistency in terminology, we shall refer to equation~\eqref{model2:aut} as the autonomous \( f \)-model.

\begin{remark}
For the sake of simplifying, to derive \eqref{model2:aut}, we have assumed that
$\frac{K}{R_0} =1$. This normalization is introduced solely for notational convenience and entails no loss of generality, as the parameter \(K/R_0\) acts as an overall multiplicative constant in the equation. 
\end{remark}

\begin{remark}
We remark that the derivation of the asymptotic models \eqref{model2} and \eqref{model2:aut} presented here is purely formal. Whether the non-autonomous \( f \)-model and the autonomous \( f \)-model constitute fully justified asymptotic models remains an open question. Addressing this question would require a rigorous justification, akin to those developed in the context of water wave theory, cf. \cite{lannes}.
\end{remark}

\begin{remark}
In order to get a better understanding of equations \eqref{model2} and \eqref{model2:aut} let us compute the linear dispersion relation of \eqref{model2}. To that purpose, we consider $f=1+g$
and neglecting terms of order $\mathcal{O}(g^2)$, we find the linear equation
\begin{equation}\label{model2linear}
	g_t-\mathcal{L}_tg=3\mathcal{Q}_t\frac{KR}{R_0}e^{3t}\mathcal{L}_tg_{xx}
	+3\mathcal{Q}_t(\frac{5KR}{2R_0}+\frac{KR_t}{R_0})e^{3t}g_{xx}.
\end{equation}
Using the definition of $\mathcal{L}_t$ and $\mathcal{Q}_t$, and grouping similar terms, equation \eqref{model2linear} can be simplified as follows
\begin{equation}\label{model2linear2}
	g_t=3\mathcal{Q}_t\frac{KR}{R_0}e^{3t}\mathcal{L}_tg_{xx}
	+4\mathcal{L}_tg.
\end{equation}
Similarly, for \eqref{model2:aut}, we find the linear problem
\begin{equation}\label{model2:linearaut}
g_t=3\mathcal{L}^2g+4\mathcal{L}g.
\end{equation}
Taking Fourier transform in the space variable, we find that
\[
\widehat{g}_t(n) = \lambda(n)\, \widehat{g}(n), \quad \lambda(n) = \frac{3 n^4}{4(1 + n^2)^2} + \frac{2 n^2}{1 + n^2},
\]
leading to distinct exponential growth rates across Fourier modes:
\[
\hat{g}(n,t) = \hat{g}(n,0)\, e^{\lambda(n)t}.
\]
This frequency-dependent amplification deforms the solution over time. In particular, the absence of uniform decay or smoothing across modes confirms that the equation is not parabolic.

\end{remark}

\section{Notation and auxiliary results}\label{notation:aux}

In this subsection, we collect the different notations that will be used throughout the manuscript as well as some auxiliary results regarding the differential operators involved in the equation. 

\subsubsection*{Functional spaces and inequalities} Throughout the paper $C= C(\cdot)$ will denote a positive constant that may depend on fixed parameters and can vary from line to line. Let $\T=(\mathbb{R}/2\pi \mathbb{Z})$ be the one-dimensional torus. Then, for $1\leq p\leq\infty$, we write $L^{p}=L^{p}(\T)$ to denote the normed space of $L^{p}$-functions on $\T$ with $||\cdot ||_{p}$ as the associated norm and let  $\widehat{f}$ denote the Fourier transform of $f$. Then,  for $s\in\T$, the inhomogeneous Sobolev space $H^{s}=H^s(\T)$ is defined as
\begin{align*}
	H^s(\T)\triangleq\left\{f\in L^2(\T):\|f\|_{H^s(\T)}^2=\displaystyle\sum_{n\in\mathbb{Z}}(1+|n|^{s})^{2}|\widehat{f}(n)|^{2}<+\infty\right\}.
\end{align*}
Moreover, we have that
\[ \norm{f}^{2}_{H^s}=\norm{f}_{L^2}^2+\norm{f}^{2}_{\dot{H}^s}, \quad \norm{f}_{\dot{H}^s}^{2}=\displaystyle\sum_{n\in\mathbb{Z}\setminus\{0\}}|n|^{2s}
|\widehat{f}(n)|^{2}.  \]

Next, we collect some useful classical inequalities that will be repeatedly use throughout the manuscript, cf. \cite{majda2001}:

\begin{lemma}\label{ine:Sobolev}
	Let \( f : \mathbb{T} \to \mathbb{R} \) be a periodic function with zero mean. Then the following estimates hold:
	
	\begin{itemize}
		\item[(i)]  If \( f \in L^\infty(\mathbb{T}) \) and \(  f_{xx} \in L^2(\mathbb{T}) \), then 
		\[
		\|f\|_{L^4(\mathbb{T})} \leq \sqrt{3} \|f\|_{L^\infty(\mathbb{T})}^{1/2} \|f_{xx}\|_{L^2(\mathbb{T})}^{1/2}.
		\]
		
		\item[(ii)]  If \( s > \frac{1}{2} \) and \( f \in H^s(\mathbb{T}) \), then
		\[
		\|f\|_{L^\infty(\mathbb{T})} \leq C_s \|f\|_{H^s(\mathbb{T})},
		\]
		where \( C_s > 0 \) depends only on \( s \).
	\end{itemize}
\end{lemma}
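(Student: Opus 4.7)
The plan is to handle the two estimates separately, both by classical arguments.

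For part (i), I would reduce the $L^4$ bound to an $L^2$ bound via the elementary splitting
\[ \|f\|_{L^4}^4 \;=\; \int_{\mathbb{T}} f^2\cdot f^2\, dx \;\leq\; \|f\|_{L^\infty}^2\,\|f\|_{L^2}^2, \]
and then convert $\|f\|_{L^2}$ into $\|f_{xx}\|_{L^2}$ using the zero-mean assumption. Since $f$ has no zero-frequency Fourier mode on $\mathbb{T}=\mathbb{R}/2\pi\mathbb{Z}$, Parseval's identity immediately yields the Poincar\'e inequality $\|f\|_{L^2}\leq\|f_x\|_{L^2}$. An integration by parts combined with Cauchy--Schwarz then gives
\[ \|f_x\|_{L^2}^2 \;=\; -\int_{\mathbb{T}} f\, f_{xx}\, dx \;\leq\; \|f\|_{L^2}\|f_{xx}\|_{L^2}, \]
so that $\|f\|_{L^2}\leq\|f_{xx}\|_{L^2}$. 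Chaining these estimates produces the stated inequality, in fact with an even cleaner constant than $\sqrt{3}$; the value $\sqrt{3}$ is presumably just a convenient overestimate (or the outcome of a slightly different interpolation splitting one might prefer for bookkeeping).

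For part (ii), the plan is the textbook Fourier-series argument. Starting from the pointwise bound $|f(x)|\leq\sum_{n\in\mathbb{Z}}|\widehat f(n)|$ and inserting the weight $(1+|n|)^s$, Cauchy--Schwarz gives
\[ \|f\|_{L^\infty} \;\leq\; \Bigl(\sum_{n\in\mathbb{Z}}(1+|n|)^{-2s}\Bigr)^{1/2}\Bigl(\sum_{n\in\mathbb{Z}}(1+|n|)^{2s}|\widehat f(n)|^2\Bigr)^{1/2}, \]
where the first factor is finite precisely when $2s>1$ and the second is controlled by $\|f\|_{H^s}$, supplying the constant $C_s$.

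Neither step presents any genuine analytic obstacle. The only bookkeeping items worth mentioning are keeping track of the $2\pi$ normalization in Parseval's identity (which can shift multiplicative constants) and verifying that the Poincar\'e constant on this torus is indeed $1$, which follows from the lowest nonzero Fourier mode satisfying $|n|\geq 1$. Consequently, I expect the main ``difficulty'' to be purely cosmetic: matching the stated constant $\sqrt{3}$ in (i) rather than the sharper value the direct argument yields.
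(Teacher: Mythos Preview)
The paper does not actually prove this lemma; it simply introduces it as a collection of ``useful classical inequalities'' with a reference to \cite{majda2001}. Your arguments for both parts are correct for the statement as written, and your observation that the direct route in (i) yields constant $1$ rather than $\sqrt{3}$ is accurate.

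That said, the constant $\sqrt{3}$ together with the way (i) is later invoked---to control $\|f_x\|_{L^4}$ and $\|\mathcal{L}_t f_x\|_{L^4}$ in the estimates for $J_1,J_2,J_3$ in the proof of Theorem~\ref{cont:criterion}---strongly suggests the intended left-hand side is $\|f_x\|_{L^4}$, not $\|f\|_{L^4}$. For that version, the standard integration-by-parts trick
\[
\int_{\mathbb{T}} f_x^4\,dx \;=\; -3\int_{\mathbb{T}} f\,f_x^2\,f_{xx}\,dx \;\leq\; 3\,\|f\|_{L^\infty}\,\|f_x\|_{L^4}^2\,\|f_{xx}\|_{L^2}
\]
delivers exactly $\|f_x\|_{L^4}\leq\sqrt{3}\,\|f\|_{L^\infty}^{1/2}\|f_{xx}\|_{L^2}^{1/2}$, explaining the constant. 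Your Poincar\'e-based argument does not reach this sharper inequality, so if you want to match what the paper actually uses, this is the step to add.
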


\subsubsection*{Estimates for the differential operators} In what follows, we collect some important properties of the differential operators $\mathcal{Q}_{t}$ and $\mathcal{L}_{t}$. Let us recall that for $G\in L^{2}(\T)$
the operators $\mathcal{Q}_{t}$ and $\mathcal{L}_{t}$ are defined via their Fourier multipliers as follows
\begin{align}
	\widehat{\mathcal{Q}_t G}(n)&=\frac{1}{1+\frac{KR}{R_0}e^{3t}n^2}\widehat{G}(n),  \label{def:Q} \\
	\widehat{\mathcal{L}_t G}(n)&=-\frac{(\frac{5KR}{2R_0}+\frac{KR_t}{R_0})e^{3t}n^2}{1+\frac{KR}{R_0}e^{3t}n^2}\widehat{G}(n), \quad \label{def:L} 
\end{align}

Let us first derive $L^p$ - estimate for the non-autonomous Helmholtz operator $\mathcal{Q}_{t}$ and the operator $\mathcal{L}_{t}$. 
\begin{lemma}\label{Lemma:Lp}
Let $R(t)=e^{\alpha t}$ with $\alpha\in\mathbb{R}$. Let $\mathcal{Q}_{t}$, $\mathcal{L}_{t}$ be the differential operators given in \eqref{def:Q} and \eqref{def:L}, respectively. 
	Then the following properties hold:
	\begin{enumerate}
		\item[\textnormal{(i)}]  For all smooth periodic functions \( f \), the following identity holds:
		\[
		\mathcal{L}_t f = \left( \frac{5}{2} + \alpha \right) \left( \mathcal{Q}_t f - f \right).
		\]
		
		\item[\textnormal{(ii)}] For all \( 1 \leq p \leq \infty \), the operator \( \mathcal{Q}_t \) extends to a bounded linear operator on \( L^p(\mathbb{T}) \), and there exists a constant \( C > 0 \) independent of \( f \) and \( t \), such that
		\[
		\| \mathcal{Q}_t f \|_{L^p(\mathbb{T})} \leq C \| f \|_{L^p(\mathbb{T})}.
		\]
		
		\item[\textnormal{(iii)}] For all \( 1 \leq p \leq \infty \), the operator \( \mathcal{L}_t \) also extends to a bounded linear operator on \( L^p(\mathbb{T}) \). More precisely, there exists a constant \( C> 0 \) such that
		\[
		\| \mathcal{L}_t f \|_{L^p(\mathbb{T})} \leq C \| f \|_{L^p(\mathbb{T})}, \quad \text{for all } f \in L^p(\mathbb{T}).
		\]
	\end{enumerate}
	
	\begin{proof} [Proof of Lemma \ref{Lemma:Lp}]
		
		\medskip

\noindent (i) The identity follows direcly by noting that
				\begin{equation*}
				\widehat{\mathcal{L}_t f}(n)
				= -\left( \frac{5}{2} + \alpha \right)\frac{\left[-1+\left(1+ \frac{K}{R_0} e^{(\alpha + 3)t} n^2\right) \right]}{1 + \frac{K}{R_0} e^{(\alpha + 3)t} n^2}  \widehat{f}(n)
				= \left( \frac{5}{2} + \alpha \right) \left( \widehat{\mathcal{Q}_t f}(n) - \widehat{f}(n) \right).
			\end{equation*}
			
\noindent (ii)	For each fixed \( t \), define \( \lambda(t)^2 := \frac{K}{R_0} e^{(\alpha + 3)t} \). Then \( \mathcal{Q}_t = (\mathrm{Id} - \lambda(t)^2 \partial_{xx})^{-1} \), the inverse of a second-order elliptic operator with periodic boundary conditions. This inverse operator admits the representation
			\[
			\mathcal{Q}_t f = \mathsf{G}_t \ast f,
			\]
			where \( \mathsf{G}_t \) is the Green's function of the operator \( \mathrm{Id} - \lambda(t)^2 \partial_{xx} \) on the torus \( \mathbb{T} \). The Green's function \( \mathsf{G}_t \) is explicitly given by
			\[
			\mathsf{G}_t(x) = \frac{1}{2\lambda(t) \sinh(\pi/\lambda(t))} \cosh\left( \frac{x - 2\pi \left\lfloor \frac{x}{2\pi} \right\rfloor - \pi}{\lambda(t)} \right), \quad x \in \mathbb{T}.
			\]
			It is smooth, positive, even, and satisfies
			\[
			\int_{\mathbb{T}} \mathsf{G}_t(x) \, dx = 1,
			\]
			for all \( t \in \mathbb{R} \). Therefore, applying Young's inequality for convolutions on \( \mathbb{T} \), we obtain
			\[
			\| \mathcal{Q}_t f \|_{L^p(\mathbb{T})} = \| \mathsf{G}_t \ast f \|_{L^p(\mathbb{T})} \leq \| \mathsf{G}_t \|_{L^1(\mathbb{T})} \| f \|_{L^p(\mathbb{T})} = \| f \|_{L^p(\mathbb{T})},
			\]
			for every \( 1 \leq p \leq \infty \).

\noindent (iii)	 Using the identity from (i), we write:
			\[
			\mathcal{L}_t f = \left( \frac{5}{2} +\alpha \right) \left( \mathcal{Q}_t f - f \right).
			\]
			Taking norms in \( L^p(\mathbb{T}) \) and applying the triangle inequality and the property (ii), we obtain:
			\[
			\| \mathcal{L}_t f \|_{L^p} \leq \left|  \left( \frac{5}{2} +\alpha \right) \right| \left( \| \mathcal{Q}_t f \|_{L^p} + \| f \|_{L^p} \right)
			\leq C  \| f \|_{L^p},
			\]
			showing that \( \mathcal{L}_t \) is bounded on \( L^p(\mathbb{T}) \).
	\end{proof}
\end{lemma}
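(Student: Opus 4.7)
The plan is to dispatch the three claims in sequence, all by direct Fourier multiplier manipulations combined with a Green's function argument for the boundedness of $\mathcal{Q}_t$. For (i), I would substitute $R(t) = e^{\alpha t}$, so that $R_t = \alpha R$, into \eqref{def:L}. Writing $\mu_t(n) := \frac{K}{R_0}e^{(\alpha+3)t}n^2$, the denominator of both multipliers equals $1 + \mu_t(n)$, and the prefactor $\frac{5KR}{2R_0} + \frac{KR_t}{R_0}$ factors as $\bigl(\tfrac{5}{2} + \alpha\bigr)\frac{K}{R_0}e^{\alpha t}$. The elementary identity $\frac{\mu}{1+\mu} = 1 - \frac{1}{1+\mu}$, applied at the level of Fourier symbols, then yields $\widehat{\mathcal{L}_t f}(n) = \bigl(\tfrac{5}{2} + \alpha\bigr)\bigl(\widehat{\mathcal{Q}_t f}(n) - \widehat{f}(n)\bigr)$, which gives (i) after inverting the Fourier transform.

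For (ii), I would recognize $\mathcal{Q}_t$ as the resolvent $(\mathrm{Id} - \lambda(t)^2 \partial_{xx})^{-1}$ with $\lambda(t)^2 := \frac{K}{R_0} e^{(\alpha+3)t} > 0$, so that $\mathcal{Q}_t f = \mathsf{G}_t \ast f$ for a periodic Green's function $\mathsf{G}_t$. Two properties of $\mathsf{G}_t$ suffice: (a) $\mathsf{G}_t \geq 0$ pointwise, and (b) $\|\mathsf{G}_t\|_{L^1(\T)} = 1$. Property (b) is immediate from $\int_\T \mathsf{G}_t = \widehat{\mathsf{G}_t}(0) = 1$, since the multiplier of $\mathcal{Q}_t$ at $n = 0$ equals $1$. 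For (a), I would either exhibit $\mathsf{G}_t$ in closed form as a periodic $\cosh$-type function — the standard 1D Helmholtz resolvent on $\T$ — or appeal to the Laplace-transform representation $\mathcal{Q}_t = \int_0^\infty e^{-s}\, e^{s\lambda(t)^2 \partial_{xx}}\, ds$, which writes $\mathcal{Q}_t$ as an integral superposition of positivity-preserving heat propagators. Young's convolution inequality on $\T$ then gives $\|\mathcal{Q}_t f\|_{L^p} \leq \|\mathsf{G}_t\|_{L^1}\|f\|_{L^p} = \|f\|_{L^p}$, uniformly in $t \in \R$ and $p \in [1, \infty]$.

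Part (iii) is then a direct corollary: combining the identity from (i) with the triangle inequality and (ii) yields
\[
\|\mathcal{L}_t f\|_{L^p} \leq \left|\tfrac{5}{2} + \alpha\right|\bigl(\|\mathcal{Q}_t f\|_{L^p} + \|f\|_{L^p}\bigr) \leq 2\left|\tfrac{5}{2} + \alpha\right|\|f\|_{L^p}.
\]
The only mildly delicate step in the whole argument is verifying the positivity and the unit $L^1$ mass of $\mathsf{G}_t$; once these are in hand, both bounds are automatically uniform in $t$, because the dilation parameter $\lambda(t)$ drops out of the final estimate. This uniformity is the structural reason why the subsequent local and global well-posedness analysis for \eqref{model2} can be carried out in standard Sobolev spaces.
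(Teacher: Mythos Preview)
Your proposal is correct and follows essentially the same route as the paper: the Fourier-symbol identity for (i), the Green's-function convolution plus Young's inequality for (ii), and the combination via the triangle inequality for (iii). The only cosmetic difference is that you offer the heat-semigroup (Laplace-transform) representation as an alternative route to positivity of $\mathsf{G}_t$, whereas the paper simply writes down the explicit periodic $\cosh$ formula; both are standard and lead to the same uniform-in-$t$ bound.
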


Next, we will provide the $H^{s}$ - bounds for the non-autonomous Helmholtz operator $\mathcal{Q}_{t}$ and the operator $\mathcal{L}_{t}$.

\begin{lemma}\label{Lemma:Sobolev}
Let $R(t)=e^{\alpha t}$ with $\alpha\in\mathbb{R}$. Let $\mathcal{Q}_{t}$, $\mathcal{L}_{t}$ be the differential operators given in \eqref{def:Q} and \eqref{def:L}, respectively.  Then the following properties hold:
	\begin{itemize}
		\item[(i)] The operator \( \mathcal{Q}_{t} \) is a smoothing operator of order \( -2 \). More precisely, there exists a constant \( C > 0 \), independent of \( t \), such that
		\[
		\| \mathcal{Q}_{t} f \|_{H^{s+2}(\mathbb{T})} \leq C \| f \|_{H^s(\mathbb{T})}, \quad \text{for all } f \in H^s(\mathbb{T}), \ s \in \mathbb{R}.
		\]
		\item[(ii)] The operator \( \mathcal{L}_{t} \) is of order zero. More precisely, there exists a constant \( C > 0 \), independent of \( t \), such that
		\[
		\| \mathcal{L}_{t} f \|_{H^s(\mathbb{T})} \leq C \| f \|_{H^s(\mathbb{T})}, \quad \text{for all } f \in H^s(\mathbb{T}), \ s \in \mathbb{R}.
		\]
	\end{itemize}
\end{lemma}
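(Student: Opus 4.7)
The approach is purely Fourier-analytic: both operators are Fourier multipliers, so their $H^s$-mapping properties reduce to pointwise bounds on their symbols combined with Plancherel's identity. Throughout, I set $\lambda(t)^2 := \frac{K}{R_0}e^{(\alpha+3)t}$ and, using $R_t = \alpha R$ with $R(t)=e^{\alpha t}$, rewrite the two symbols as
\begin{equation*}
m_Q(n,t) = \frac{1}{1+\lambda(t)^2 n^2}, \qquad m_L(n,t) = -\left(\tfrac{5}{2}+\alpha\right)\frac{\lambda(t)^2 n^2}{1+\lambda(t)^2 n^2}.
\end{equation*}

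For part (ii), I would observe that $0 \leq \lambda^2 n^2/(1+\lambda^2 n^2) \leq 1$ regardless of $\lambda$ and $n$, hence $|m_L(n,t)| \leq |\tfrac{5}{2}+\alpha|$ uniformly in $(n,t)\in\mathbb{Z}\times\mathbb{R}$. Plancherel's identity then gives
\begin{equation*}
\|\mathcal{L}_t f\|_{H^s}^2 = \sum_{n\in\mathbb{Z}} (1+|n|^s)^2 |m_L(n,t)|^2 |\widehat f(n)|^2 \leq \left(\tfrac{5}{2}+\alpha\right)^2 \|f\|_{H^s}^2,
\end{equation*}
which is the claimed zero-order bound, with a constant genuinely independent of $t$. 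Alternatively, one can simply combine Lemma \ref{Lemma:Lp}(i) with the $H^s$-boundedness of $\mathcal{Q}_t$ derived from (i) of the present lemma.

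For part (i), the task is to establish the pointwise inequality
\begin{equation*}
(1+|n|^{s+2})^2 \, m_Q(n,t)^2 \leq C(t)\,(1+|n|^s)^2,
\end{equation*}
which, once summed against $|\widehat f(n)|^2$, yields the $H^{s+2}$ bound. I would prove this by splitting into the low-frequency regime $|n|\lambda(t) \leq 1$, where $1+\lambda^2 n^2 \leq 2$ and $|n|^{s+2} \leq \lambda(t)^{-2}|n|^s$, and the high-frequency regime $|n|\lambda(t) > 1$, where $1+\lambda^2 n^2 \geq \lambda^2 n^2$ so that the quotient $(1+|n|^{s+2})/(1+\lambda^2 n^2)$ is controlled by $\lambda(t)^{-2}(1+|n|^s)$. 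Collecting both cases delivers the bound with a constant of the form $C(t) \sim 1 + \lambda(t)^{-2}$.

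The main (mild) obstacle is the precise $t$-dependence of this constant: the gain of two derivatives from the smoothing operator $\mathcal{Q}_t = (\mathrm{Id}-\lambda(t)^2 \partial_{xx})^{-1}$ is inseparable from a prefactor $\lambda(t)^{-2}$, so strict uniformity in $t\in\mathbb{R}$ holds only when $\lambda(t)$ is bounded away from zero (for instance when $\alpha+3\geq 0$ and $t\geq 0$, giving $\lambda(t)^{-2} \leq R_0/K$). On any bounded time interval this factor is harmless and absorbed into $C$, which is what the subsequent PDE estimates require.
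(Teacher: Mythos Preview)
Your approach---pointwise symbol bounds combined with Plancherel---is exactly what the paper does. Your careful remark that the constant in part~(i) necessarily carries a factor of order $1+\lambda(t)^{-2}$ is in fact a point the paper glosses over: its own proof asserts uniformity in $t$ while simultaneously noting that the large-$n$ asymptotic of the relevant ratio is $\bigl(\tfrac{K}{R_0}e^{(\alpha+3)t}\bigr)^{-2}$, which is plainly $t$-dependent; as you correctly observe, this is harmless on bounded time intervals, which is all the subsequent PDE estimates require.
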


\begin{proof} [Proof of Lemma \ref{Lemma:Sobolev}]
	We analyze the symbols of the operators \( \mathcal{Q}_t \) and \( \mathcal{L}_t \) directly in Fourier space.
	
	\medskip
	
	\noindent (i) The symbol of \( \mathcal{Q}_t \) is
	\[
	m_{\mathcal{Q}_t}(n) = \frac{1}{1 + \frac{K}{R_0} e^{(\alpha + 3)t} n^2}.
	\]
	To estimate the \( H^{s+2} \)-norm of \( \mathcal{Q}_t f \), we compute
	\[
	\| \mathcal{Q}_t f \|_{H^{s+2}(\mathbb{T})}^2 = \sum_{n \in \mathbb{Z}} (1 + n^2)^{s+2} \left| m_{\mathcal{Q}_t}(n) \widehat{f}(n) \right|^2.
	\]
	Observe that for all \( n \in \mathbb{Z} \),
	\[
	(1 + n^2)^{s+2} \left| \frac{1}{1 + \frac{K}{R_0} e^{(\alpha + 3)t} n^2} \right|^2
	= (1 + n^2)^s (1 + n^2)^2  \left( \frac{1}{1 + \frac{K}{R_0} e^{(\alpha + 3)t} n^2} \right)^2.
	\]
	For \( |n| \geq 1 \), the factor
	\[
	\frac{(1 + n^2)^2}{\left( 1 + \frac{K}{R_0} e^{(\alpha + 3)t} n^2 \right)^2},
	\]
	is uniformly bounded in \( t \) and \( n \), because for large \( n \), it behaves like \( \left( \frac{1}{\frac{K}{R_0} e^{(\alpha + 3)t}} \right)^2 \), and for small \( n \), all terms are bounded.
	
	Thus,
	\[
	(1 + n^2)^{s+2} \left| m_{\mathcal{Q}_t}(n) \right|^2 \leq C (1 + n^2)^s
	\]
	with a constant \( C > 0 \) independent of \( t \). Therefore,
	\[
	\| \mathcal{Q}_t f \|_{H^{s+2}}^2 \leq C \sum_{n \in \mathbb{Z}} (1 + n^2)^s |\widehat{f}(n)|^2 = C \| f \|_{H^s}^2.
	\]
	
	\medskip
	
	\noindent (ii) The symbol of \( \mathcal{L}_t \) is
	\[
	m_{\mathcal{L}_t}(n) = -\frac{\left( \frac{5}{2} +  \alpha \right) \frac{K}{R_{0}}e^{(\alpha + 3)t} n^2}{1 + \frac{K}{R_0} e^{(\alpha + 3)t} n^2}.
	\]
	Note that for all \( n \in \mathbb{Z} \),
	\[
	|m_{\mathcal{L}_t}(n)| \leq \left(  \frac{5}{2} +  |\alpha| \right)  \frac{\frac{K}{R_0}e^{(\alpha + 3)t} n^2}{1 + \frac{K}{R_0} e^{(\alpha + 3)t} n^2}
	\leq \left( \frac{5}{2} +  |\alpha|\right)=C
	\]
	which is independent of \( n \) and \( t \). Hence,
	\[
	(1 + n^2)^s |m_{\mathcal{L}_t}(n) \widehat{f}(n)|^2 \leq C^2 (1 + n^2)^s |\widehat{f}(n)|^2,
	\]
	and summing over \( n \in \mathbb{Z} \), we obtain
	\[
	\| \mathcal{L}_t f \|_{H^s}^2 \leq C^2 \| f \|_{H^s}^2.
	\]
\end{proof}

\begin{remark}\label{eleccion:R}
	Throughout this section, all results have been established under the particular assumption \( R(t) = e^{\alpha t} \), with \( \alpha \in \mathbb{R} \). This specific form is essential for the validity of the identity stated in (i) in Lemma \ref{Lemma:Lp}, which plays a central role in deriving several key a priori estimates in Sobolev spaces, cf. Section \ref{sec:3}. While it is conceivable that similar results could be obtained for more general choices of \( R(t) \), the exponential form is not only mathematically convenient but also physically motivated. In particular, it corresponds to a class of flows with exponential dilation, which are of direct relevance in the physical scenarios we aim to model. For this reason, we have focused our analysis on this case.
\end{remark}

\begin{remark}
In the special case \(\alpha = -3\), the operators \(\mathcal{L}_t\) and \(\mathcal{Q}_t\) reduce to their autonomous counterparts \(\mathcal{L}\) and \(\mathcal{Q}\), and the corresponding $L^{p}$ and $H^{s}$ estimates provided in Lemma \ref{Lemma:Lp} and Lemma \ref{Lemma:Sobolev} follow directly from the general (time-dependent) result.
\end{remark}

\section{Local-in-time existence and blow-up criteria for the $f$-model}\label{sec:3}
The main objective of this section is twofold. First, we establish the local-in-time existence and uniqueness of solutions to the $f$-model equation~\eqref{model2}, under suitable initial data in Sobolev spaces, see Theorem \ref{result:th}. More precisely, mentioned in Remark \ref{eleccion:R} we will only focus on the case wher $R(t)=e^{\alpha t}$ with $\alpha\in \mathbb{R}$. Second, we derive a blow-up criterion, see Theorem \ref{cont:criterion}, that characterizes the possible breakdown of solutions in finite time, thereby providing insight into the mechanisms that may lead to loss of regularity or singularity formation.

\begin{theorem}\label{result:th}
Let $R(t)=e^{\alpha t}$ with $\alpha\in\mathbb{R}$. Let $f_{0}(x)\in H^{2}(\T)$ with mean zero. Then, there exists $T>0$ and a unique zero mean local solution 
\[ f\in C^{1}\left([-T,T]); H^{2}(\T)\right), \]
of \eqref{model2} with $f(x,0)=f_{0}(x)$. 
\end{theorem}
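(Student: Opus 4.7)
The plan is to reformulate \eqref{model2} as an abstract ODE in the Hilbert space $H^2(\mathbb{T})$ and apply a Picard--Lindelöf argument. The crucial structural observation is that, thanks to the two-derivative gain provided by $\mathcal{Q}_t$ (Lemma \ref{Lemma:Sobolev}(i)) together with the zero-order boundedness of $\mathcal{L}_t$ (Lemma \ref{Lemma:Sobolev}(ii)), every term on the right-hand side of \eqref{model2} maps $H^2$ into itself. Thus the equation can be written as $f_t = \Phi(t,f)$ with $\Phi(t,\cdot):H^2(\mathbb{T})\to H^2(\mathbb{T})$ a nonlinear map, and there is no need to deal with the apparent ``parabolic'' character of $f_t-\mathcal{L}_t f$ in \eqref{model} directly.

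First I would verify that $\Phi(t,\cdot)$ is well-defined on $H^2(\mathbb{T})$. The linear contribution $\mathcal{L}_t f$ is controlled in $H^2$ by Lemma \ref{Lemma:Sobolev}(ii). For each nonlinear term of the schematic shape $\mathcal{Q}_t\,\partial_x(\mathcal{A}_t f\cdot \mathcal{B}_t f)$, where $\mathcal{A}_t,\mathcal{B}_t\in\{\mathrm{Id},\mathcal{L}_t,\partial_x\}$, the product $\mathcal{A}_t f\cdot \mathcal{B}_t f$ lives in $H^1(\mathbb{T})$ thanks to the one-dimensional algebra property (using $H^1\hookrightarrow L^\infty$ from Lemma \ref{ine:Sobolev}(ii)); the extra $\partial_x$ drops this to $L^2$, and $\mathcal{Q}_t$ recovers two derivatives, landing back in $H^2$. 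The same bilinear estimate, polarized, gives a local Lipschitz bound: on the ball $B_M=\{f\in H^2(\mathbb{T})\colon \|f\|_{H^2}\leq M\}$ one obtains
\[
\|\Phi(t,f)-\Phi(t,g)\|_{H^2}\leq C(M,T,\alpha,K/R_0)\,\|f-g\|_{H^2},
\]
uniformly for $t\in[-T,T]$. Continuity of $t\mapsto\Phi(t,f)$ follows from the smooth dependence of the symbols of $\mathcal{Q}_t$ and $\mathcal{L}_t$ on $R(t)=e^{\alpha t}$ and $R_t(t)=\alpha e^{\alpha t}$, which are bounded on any compact time interval.

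With these two ingredients in place I would apply the Banach fixed-point theorem to the Duhamel-type integral equation
\[
f(t)=f_0+\int_0^t\Phi(s,f(s))\,ds
\]
in the complete metric space $C([-T,T];H^2(\mathbb{T}))$, for $T>0$ chosen small enough in terms of $\|f_0\|_{H^2}$ (and $\alpha$, $K/R_0$). Uniqueness follows from the same Lipschitz bound via Grönwall, and differentiating the integral equation upgrades the solution to $f\in C^1([-T,T];H^2(\mathbb{T}))$. The zero-mean property propagates trivially: $\mathcal{L}_t$ has Fourier symbol vanishing at $n=0$, every nonlinear term is an exact $x$-derivative so that its zeroth Fourier coefficient is zero, and therefore $\tfrac{d}{dt}\int_{\mathbb{T}}f(x,t)\,dx=0$.

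The main obstacle is not conceptual but bookkeeping: one must check that the bilinear estimates on $B_M$ yield a Lipschitz constant that is finite and depends only on $M$ and on $\sup_{[-T,T]}e^{(\alpha+3)t}$, so that the contraction radius can be chosen uniformly on the whole time slab. Since $R(t)=e^{\alpha t}$ and all operator symbols are uniformly controlled on compact time intervals (cf.\ the proof of Lemma \ref{Lemma:Sobolev}), this amounts to a routine if tedious combination of the product estimates in $H^1$ and the smoothing bound for $\mathcal{Q}_t$. The backward-in-time portion $t\in[-T,0]$ is handled identically, yielding the full interval $[-T,T]$.
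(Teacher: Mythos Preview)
Your proposal is correct and follows essentially the same strategy as the paper's own proof: reformulate \eqref{model2} as an ODE $f_t=\mathscr{F}[f]$ in $H^2(\mathbb{T})$, use the two-derivative smoothing of $\mathcal{Q}_t$ together with the zero-order bound on $\mathcal{L}_t$ to show that $\mathscr{F}$ maps $H^2$ to $H^2$ and is locally Lipschitz, and then run a Banach fixed-point argument on the integrated equation in $C([-T,T];H^2_0)$. The paper organizes the bookkeeping slightly differently (it invokes the $H^2$ algebra property directly rather than your more explicit $H^1$-product/$\partial_x$/$\mathcal{Q}_t$ chain), but the mechanism and the use of Lemma~\ref{Lemma:Sobolev} are the same.
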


\begin{proof}[Proof of Theorem \ref{result:th}]
In order to show the result, we will apply the Picard Theorem on Banach spaces \cite[Theorem 3.1, \S 3.2.2]{majda2001}. First, we show that the mean-zero property is preserved under the evolution. By integrating over the spatial domain and using the periodic boundary conditions we find that
\[ \int_{\T} f_{t} \ dx  = \int_{\T} \mathcal{L}_{t} f \ dx.\]
Since 
\[ \int_{\T}\mathcal{L}_{t}f (x) \dx = 2\pi \widehat{\mathcal{L}_{t}f}(0),\]
and recalling the Fourier multiplier of the operator $\mathcal{L}_{t}$ given in \eqref{def:L} we have that $\widehat{\mathcal{L}_{t}f}(0)=0$.
Thus
\[ \int_{\T} f(x,t)  \ dx = \int_{\T} f_{0}(x) \ dx.\]

To that purpose, we write \eqref{model2} together with the initial condition $f(x,0)=f_{0}(x)$ as
\begin{equation}\label{initialPicard}
	f_{t}=\mathscr{F}[f], \quad f(x,0)=f_{0}(x),
\end{equation}
where
\begin{equation*}
	\mathscr{F}[f]=\mathcal{L}_tf+\mathcal{Q}_t\frac{3K}{2R_0}e^{(3+\alpha)t}(f_x\mathcal{L}_tf)_x+3\mathcal{Q}_t\frac{K}{R_0}e^{(3+\alpha)t}(\mathcal{L}_tf_xf)_x
	+3\mathcal{Q}_t(\frac{5K}{2R_0}-\frac{\alpha K}{R_0})e^{(3+\alpha)t}(f_{x}f)_x.
\end{equation*} 
In order to prove that \eqref{initialPicard} has a solution, we will show that its integrated version has a solution and consider the operator 
\[ \Phi[f]=f_{0}+\int_{0}^{t}\mathscr{F}[f(\tau)] \ d\tau, \]
defined on $C\left([-T,T]); H^{2}_{0}(\T)\right)$ where $T>0$ is to be determined later on. We have denoted by $H^{2}_{0}(\T)$ the function space 
\[
H^2_0(\mathbb{T}) := \left\{ f \in H^2(\mathbb{T}) \; : \; \int_{\mathbb{T}} f(x)\, dx = 0 \right\}.
\]

 We will show that $\Phi$ is a bounded map on this space and moreover a contraction. Hence, Banach fixed point theorem will then yield a unique fixed point $f$ for the mapping $f\to \Phi[f]$. \medskip

First, let us prove that 
\begin{equation}\label{Fball} \mathscr{F}\left(B_{M}(H^{2}_{0})\right) \to B_{C_{0}(1+\lambda(t))M^{2}}(H^2_{0}), 
\end{equation}
for some positive constant $C_{0}>0$ independent of the radious $M>0$ and $\lambda(t)=\frac{K}{R_{0}}e^{(3+\alpha)t}$. The notation 
$B_{R}(H^2_{0})$ denotes de ball of radious $R$ in the Hilbert space $H^{2}_{0}$. Using the fact that $H^{2}_{0}$ is a Banach algebra, and (i), (ii) of Lemma \ref{Lemma:Sobolev} one can easily show that
\begin{align*}
\norm{\mathscr{F}[f]}_{H^2}&\leq C\bigg( \norm{f}_{H^2}+ \lambda(t)\norm{\mathcal{Q}_{t}(f_x\mathcal{L}_tf)_x}_{H^{2}}+\lambda(t)\norm{\mathcal{Q}_{t}(f\mathcal{L}_tf_{x})_x}_{H^{2}}+\lambda(t)\norm{\mathcal{Q}_{t}(f_x f)_x}_{H^{2}}\bigg) \\
& \leq C \left(\norm{f}_{H^2}+\lambda(t)\norm{f}_{H^{2}}^{2}\right),
\end{align*}
which shows \eqref{Fball}. \medskip

In the sequel, let us show that $\mathscr{F}$ is locally Lipschitz continous in $H^{2}_{0}$. To do so, we establish that for $f,g\in H^{2}_{0}$ we have that
\begin{equation}\label{lipschitz:condition}
	\displaystyle\sup_{f,g\in B_{M}(H^2)_{0}}\frac{\norm{\mathscr{F}[f]-\mathscr{F}[g]}_{H^{2}}}{\norm{f-g}_{H^2}}\leq C_{0}(1+M \lambda(t)).
	\end{equation}
where $C_{0}>0$ is the same constant as before (does not depende on $M$).
Using the fact that $\mathcal{L}_{t}$ is linear and the remaining terms are nonlinear we have that
\[
	\norm{\mathscr{F}[f]-\mathscr{F}[g]}_{H^{2}}\leq C \norm{\mathcal{L}_{t}(f-g)}_{H^2}+N_{1}+N_{2}+N_{3},
\]
where
\begin{align*}
N_{1}=	\lambda(t)&\norm{\mathcal{Q}_{t}\partial_{x}\left(f_{x}\mathcal{L}_{t}f-g_{x}\mathcal{L}_{t}g \right)}_{H^2},  \quad N_{2}=\lambda(t)\norm{\mathcal{Q}_{t}\partial_{x}\left(f\mathcal{L}_{t}f_{x}-g\mathcal{L}_{t}g_{x} \right)}_{H^2}, \\
&\quad \quad \quad \ \ \ N_{3}=\lambda(t)\norm{\mathcal{Q}_{t}\partial_{x}\left(ff_{x}-gg_{x} \right)}_{H^2}.
\end{align*}
Clearly, using (ii) in Lemma \ref{Lemma:Sobolev} we find that
\[\norm{\mathcal{L}_{t}(f-g)}_{H^2}\leq C \norm{f-g}_{H^2}. \]
To deal with the nonlinear terms, have to rewrite the terms by adding and subtracting. More precisely, we have that
\begin{align*}
|N_{1}|&\leq C \lambda(t) \norm{(f_{x}-g_{x})\mathcal{L}_{t}f+g_{x}(\mathcal{L}_{t}f-\mathcal{L}_{t}g)}_{H^1} \\
&\leq C \lambda(t)\left( \norm{f_{x}-g_{x}}_{H^1}\norm{\mathcal{L}_{t}f}_{H^1}+\norm{g_{x}}_{H^1}\norm{\mathcal{L}_{t}(f-g)}_{H^1}\right) \\
&\leq C \lambda(t) \left(\norm{f}_{H^2}+\norm{g}_{H^2}\right)\norm{f-g}_{H^2}.
\end{align*}
Arguing similary, one can show that
\begin{align*}
	|N_2| &\leq C \lambda(t) \left\| \mathcal{L}_t f_x \cdot f - \mathcal{L}_t g_x \cdot g \right\|_{H^1} \\
	&\leq C \lambda(t) \left( \| \mathcal{L}_t f_x - \mathcal{L}_t g_x \|_{H^1} \| f \|_{H^1} + \| \mathcal{L}_t g_x \|_{H^1} \| f - g \|_{H^1} \right) \\
	&\leq C \lambda(t) \left( \| f \|_{H^2} + \| g \|_{H^2} \right) \| f - g \|_{H^2},
\end{align*}
and
\begin{align*}
	|N_3| &\leq C \lambda(t) \left\| f_x f - g_x g \right\|_{H^1} \\
	&\leq C \lambda(t) \left( \| f_x - g_x \|_{H^1} \| f \|_{H^2} + \| g_x \|_{H^1} \| f - g \|_{H^1} \right) \\
	&\leq C \lambda(t) \left( \| f \|_{H^2} + \| g \|_{H^2} \right) \| f - g \|_{H^2}.
\end{align*}
Thus, we find that
\begin{equation}	\norm{\mathscr{F}[f]-\mathscr{F}[g]}_{H^{2}}\leq C\| f - g \|_{H^2}+ C\lambda(t) \left( \| f \|_{H^2} + \| g \|_{H^2} \right) \| f - g \|_{H^2},
\end{equation}
which shows \eqref{lipschitz:condition}. The proof now concludes as follows. Let $f_{0}\in H^{2}_{0}$ and define 
\[ M=2\norm{f_0}_{H^2}, \quad T=\displaystyle\min\{\frac{1}{2C_{0}M(1+\lambda^{\star})}, \frac{1}{2C_{0}(1+M\lambda^{\star})}\} \]
where $C_{0}$ is the constant above and 
\[ \lambda^{\star}=\displaystyle\sup_{t\in[-T,T]}\lambda(s).\] 
Consider the following space
\[ \mathcal{B}=C\left([-T,T]; \overline{B}_{M}(H^2_{0})\right).\]
	We claim that $\Phi[f]:\mathcal{B}\mapsto\mathcal{B}$ is a contraction. Indeed, we we first have that
	\begin{align}
		\norm{\Phi[f]}_{\mathcal{B}}\leq \norm{f_0}_{H^{2}}+\displaystyle\sup_{t\in[-T,T]}\int_{0}^{t}\norm{\mathscr{F}[f(s)]}_{H^2} \ ds \leq \frac{M}{2}+T\left( C_{0}(1+\lambda^{\star})M^{2} \right)=M.
	\end{align}
Moreover, the fact that it is a contraction follows from \eqref{lipschitz:condition}, namely
	\begin{align*}
	\norm{\Phi[f]-\Phi[g]}_{\mathcal{B}}&\leq \displaystyle\sup_{t\in[-T,T]}\int_{0}^{t}\norm{\mathscr{F}[f(s)]-\mathscr{F}[g(s)]}_{H^2} \ ds \\
	&\leq T C_{0}(1+M\lambda^{\star})) \| f - g \|_{H^2}=\frac{1}{2} \| f - g \|_{H^2}.
\end{align*}
Invoking Banach fixed point argument we conclude that there exists a unique fixed point $f\in \mathcal{B}$ of the mapping $f\to \Phi[f]$. Taking a time derivative, we have thus obtained a local in time solution to \eqref{model2} in $H^{2}$.
\end{proof}

\begin{remark}
	The result of Theorem~\ref{result:th} can be extended, with minor modifications, to show the existence and uniqueness of solutions to \eqref{model2} in the H\"older space
	\[
	f \in C^1\left([-T,T], C^{1,\alpha}(\mathbb{T})\right), \quad \text{for some } 0 < \alpha < 1,
	\]
	for a certain time \( T > 0 \). The key change in the argument is that, instead of applying Lemma~\ref{Lemma:Sobolev} to estimate the operators \( \mathcal{Q}_t \) and \( \mathcal{L}_t \) in the \( H^2 \) setting, one can use analogous Schauder estimates in the H\"older spaces \( C^{k,\alpha} \), cf. \cite[Theorem 4, \S4.4 ]{SteinDiff} or \cite[Theorem 6.19, \S6.6]{abels2012}. To avoid repetition, we omit the details of this adaptation.
\end{remark}

\begin{remark}
	We observe that the existence and uniqueness result of Theorem~\ref{result:th} applies in particular to the autonomous case \eqref{model2:aut}. Indeed, when choosing \( \alpha = -3 \), the coefficient \( R(t) = e^{\alpha t} \) becomes \( R(t) = e^{-3t} \), and the time-dependent operators \( \mathcal{Q}_t =\mathcal{Q} \)and \( \mathcal{L}_t=\mathcal{L} \) become independent of time.  In this case, equation \eqref{model2} reduces exactly to the autonomous model \eqref{model2:aut}. Therefore, Theorem~\ref{result:th} ensures the local well-posedness of \eqref{model2:aut} in both the Sobolev setting \( H^2(\mathbb{T}) \) and the H\"older space \( C^{1,\alpha}(\mathbb{T}) \), as discussed in the previous remark.

\end{remark}

We now establish a continuation criterion which characterizes precisely when a local solution to equation \eqref{model2} can be extended beyond its maximal time of existence. 
\begin{theorem}[Continuation criterion]\label{cont:criterion}
	Let \( f_0 \in H^2(\mathbb{T}) \) be a function with zero mean, and let \( T > 0 \) denote the maximal time of existence for a solution 
	\[
	f \in C^1([0, T), H^2),
	\]
	to equation \eqref{model2} with initial condition \( f(x,0) = f_0(x) \). Then the solution \( f(x,t) \) can be continued beyond time \( T \) up to some \( T^\star > T \) if and only if
	\begin{equation}
		\int_0^{T^\star} \lambda(t) \left(1 + \|f(t)\|_{L^\infty} \right)\, dt < \infty.
	\end{equation}
\end{theorem}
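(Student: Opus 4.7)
The plan is to rephrase the biconditional as: the solution cannot be continued past $T$ if and only if $\int_0^T \lambda(t)(1+\|f(t)\|_{L^\infty})\,dt = +\infty$, where $\lambda(t) := (K/R_0)\, e^{(3+\alpha)t}$. The forward (necessity) direction is immediate: if $f$ extends with values in $H^2$ up to some $T^* > T$, then the Sobolev embedding $H^2(\mathbb T) \hookrightarrow L^\infty(\mathbb T)$ together with continuity in time and the boundedness of $\lambda$ on $[0,T^*]$ yield a bounded integrand, hence a finite integral.

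For sufficiency I argue by contradiction. Fix $T^* > T$ and assume $\int_0^T \lambda(s)(1+\|f(s)\|_{L^\infty})\,ds < \infty$. The core of the proof is the Gr\"onwall-type $H^2$ a priori estimate
\begin{equation*}
\frac{d}{dt}\|f(t)\|_{H^2}^2 \;\leq\; C\bigl(1 + \lambda(t)(1 + \|f(t)\|_{L^\infty})\bigr)\|f(t)\|_{H^2}^2,
\end{equation*}
where $C$ depends only on $\alpha$ and on uniform positive lower and upper bounds of $\lambda$ over $[0,T^*]$. Together with the assumed finiteness of the integral, this gives $\sup_{t\in[0,T)}\|f(t)\|_{H^2} =: M < \infty$. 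Applying the local existence Theorem~\ref{result:th} with initial datum $f(\cdot,t_0)$ at some $t_0<T$ sufficiently close to $T$, the new lifespan depends only on $M$ and on the coefficients on $[0,T^*]$, so one obtains an extension past $T$, contradicting the maximality of $T$.

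The main work is thus the derivation of the above differential inequality. Pairing equation~\eqref{model2} with $f$ in $H^2$, the linear term $\langle \mathcal{L}_t f, f\rangle_{H^2}$ is controlled by $C\|f\|_{H^2}^2$ via Lemma~\ref{Lemma:Sobolev}(ii). The nonlinear right-hand side of \eqref{model2} can be written as $\mathcal{Q}_t\bigl(\lambda(t)\,\partial_x P[f]\bigr)$, where
\begin{equation*}
P[f] := \tfrac{3}{2}\,f_x\,\mathcal{L}_t f \;+\; 3\,f\,\mathcal{L}_t f_x \;+\; 3\bigl(\tfrac{5}{2}+\alpha\bigr)\,f\,f_x.
\end{equation*}
A direct Fourier-multiplier estimate shows that $\mathcal{Q}_t \partial_x$ is of order $-1$ and bounded from $H^1$ to $H^2$ uniformly on $[0,T^*]$ (the constant blowing up only if $\lambda \to 0$, which cannot happen on our compact interval). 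Hence $\|\mathcal{Q}_t(\lambda\partial_x P[f])\|_{H^2} \leq C\lambda(t)\,\|P[f]\|_{H^1}$, and the task reduces to the tame estimate
\begin{equation*}
\|P[f]\|_{H^1} \;\leq\; C\,\|f\|_{L^\infty}\,\|f\|_{H^2}.
\end{equation*}
The most delicate term is $\|f_x\,\mathcal{L}_t f_x\|_{L^2}$, in which both factors carry a derivative; this will be bounded via the Gagliardo-Nirenberg interpolation $\|g_x\|_{L^4}\leq C\|g_{xx}\|_{L^2}^{1/2}\|g\|_{L^\infty}^{1/2}$ applied to $g=f$ and to $g=\mathcal{L}_t f$, using that $\mathcal{L}_t$ commutes with $\partial_x$ and is bounded on $L^\infty$ by Lemma~\ref{Lemma:Lp}. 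The remaining bilinear contributions such as $\|f_{xx}\,\mathcal{L}_t f\|_{L^2}$ and $\|f f_x\|_{L^2}$ follow from H\"older's inequality combined with Sobolev embedding. The main obstacle is precisely arranging this tame estimate so that the $L^\infty$ weight lies on $f$ itself and not on any of its derivatives, which is what produces the sharp $\|f\|_{L^\infty}$-criterion stated in the theorem.
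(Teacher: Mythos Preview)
Your argument is correct and leads to the same $H^2$ Gr\"onwall inequality as the paper, but the route to the energy estimate is organized differently and is in fact somewhat more streamlined. The paper splits the $H^2$ estimate into an $L^2$ part (testing against $f$) and a $\dot H^2$ part (testing against $f_{xxxx}$); in each it uses the self-adjointness of $\mathcal{Q}_t$ together with the structural identities $\mathcal{L}_t f=(\tfrac{5}{2}+\alpha)(\mathcal{Q}_t f-f)$ and $\mathcal{L}_t f=(\tfrac{5}{2}+\alpha)\lambda(t)\,\mathcal{Q}_t f_{xx}$ to rewrite the nonlinear integrals, obtaining several exact cancellations before estimating. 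You instead pair directly in $H^2$ and absorb the factor $\lambda(t)\,\mathcal{Q}_t\partial_x$ in one stroke as a Fourier multiplier bounded from $H^1$ to $H^2$, reducing everything to the tame product estimate $\|P[f]\|_{H^1}\le C\|f\|_{L^\infty}\|f\|_{H^2}$. The key technical ingredients---the $L^\infty$ boundedness of $\mathcal{L}_t$ and the Gagliardo--Nirenberg bound $\|f_x\|_{L^4}\le C\|f\|_{L^\infty}^{1/2}\|f_{xx}\|_{L^2}^{1/2}$ for the term $f_x\,\mathcal{L}_t f_x$---are identical in both proofs. Your approach avoids the separate $L^2$ computation and the cancellation bookkeeping, at the modest cost of a constant that depends on $\min_{[0,T^\star]}\lambda$; since $\lambda$ is continuous and positive on the compact interval this is harmless for the continuation argument. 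Note incidentally that the combined operator $\lambda(t)\,\mathcal{Q}_t\partial_x$ is actually bounded $H^1\to H^2$ \emph{uniformly} in $t$ (its symbol satisfies $|\lambda n/(1+\lambda n^2)|\le |n|^{-1}$ for $n\neq 0$), so one could sharpen your estimate to remove the $\lambda(t)$ factor from the nonlinear term altogether---this is effectively what the paper's identity $\lambda\mathcal{Q}_t\partial_{xx}=(\tfrac{5}{2}+\alpha)^{-1}\mathcal{L}_t$ encodes.
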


\begin{proof}[Proof of Theorem \ref{cont:criterion}]	
Testing equation \eqref{model2} with $f$, integrating by parts and using that the operator $\mathcal{Q}$ is self-adjoint, we find that
	\[ 
	\frac{1}{2}\frac{d}{dt}\norm{f}_{L^{2}}^{2} =\int_{\T} \mathcal{L}_{t}f f \ dx+I_{1}+I_{2}+I_{3},
	\]
	where
	\begin{align*} 
		I_{1}=\frac{3}{2}\lambda(t)\int_{\T} f_{x}&\mathcal{L}_{t}f \mathcal{Q}_{t}f_{x} \ dx, \quad I_{2}= - 3\lambda(t) \int_{\T} f\mathcal{L}_{t}f_{x}\mathcal{Q}_{t} f_{x}\ dx, \\
		I_{3}&=-3\left(\frac{5}{2}+\alpha\right)\lambda(t)\int_{\T} f_{x}f \mathcal{Q}_{t}f_{x} \ dx.
	\end{align*}
	For notational convenience, we set $\lambda(t) := \frac{K}{R_0} e^{(3 + \alpha)t}.$ Using the identity (i) of Lemma \ref{Lemma:Lp} we find that
	\[ I_{1}=- \frac{3}{2}\left(\frac{5}{2}+\alpha\right) \lambda(t)\int_{\T}f_{x}f \mathcal{Q}_{t}f_{x} \ dx+ \frac{3}{2}\left(\frac{5}{2}+\alpha\right) \lambda(t)\int_{\T}f_{x}\mathcal{Q}_{t}f \mathcal{Q}_{t}f_{x} \ dx =I_{11}+I_{12}. \]
	
	Integrating by parts in $I_{11}$, noticing that 
	\begin{equation}\label{relation:QL} \mathcal{L}_{t}f=(\frac{5}{2}+\alpha)\lambda(t)\mathcal{Q}_{t}f_{xx}
	\end{equation}
	and using (iii) in Lemma \ref{Lemma:Lp} we find that
	\begin{equation}\label{est:I11}
		|I_{11}|= \abs{\frac{3}{4}\left(\frac{5}{2}+\alpha\right) \lambda(t)\int_{\T} f^2\mathcal{Q}_{t}f_{xx} \ dx} \leq C \norm{\mathcal{L}_{t}f}_{L^\infty} \norm{f}_{L^2}^{2} \leq C  \norm{f}_{L^\infty} \norm{f}_{L^2}^{2}.
	\end{equation}
	
	Similarly, integrating by parts in $I_{12}$ and using relation \eqref{relation:QL}
	\begin{equation}\label{term:I12}
		I_{12}=- \frac{3}{2}\left(\frac{5}{2}+\alpha\right) \lambda(t)\int_{\T} f (\mathcal{Q}_{t}f_{x})^{2} \ dx -  \frac{3}{2}\int_{\T} f \mathcal{Q}_{t}f \mathcal{L}_{t} f\ dx 
	\end{equation}
	Thus, by means of Lemma \ref{Lemma:Lp} for $p=2$ we conclude that
	\begin{equation}\label{est:I12}
		\abs{I_{12}}\leq C \lambda(t)  \norm{f}_{L^{\infty}}\norm{f}_{L^2}^{2}.
	\end{equation}
	Combining \eqref{est:I11} and \eqref{est:I12} we find that
	\begin{equation*}
		\abs{I_{1}}\leq C \lambda(t)  \norm{f}_{L^{\infty}}\norm{f}_{L^2}^{2}.
	\end{equation*}
	In order to bound $I_{2}$ we make use of identity (i) of Lemma \ref{Lemma:Lp} to write
	\begin{equation*}
		I_{2}=3(\frac{5}{2}+\alpha)\lambda(t) \int_{\T} ff_{x} \mathcal{Q}_{t}f_{x} \ dx -3(\frac{5}{2}+\alpha)\lambda(t)\int_{\T} f (\mathcal{Q}_{t}f_{x})^{2} \ dx=I_{21}+I_{22}
	\end{equation*}
	Noticing that $I_{21}=-I_{3}$ and that $I_{22}$ equals twice the first integral on the right-hand side in \eqref{term:I12} we conclude that
	\begin{equation}\label{I1:I3}
		\abs{I_{1}+I_{2}+I_{3}}\leq C  \lambda(t)  \norm{f}_{L^{\infty}}\norm{f}_{L^2}^{2}.
	\end{equation}
	Moreover, invoking H\"older's inequality and (ii) of Lemma  \ref{Lemma:Lp} with $p=2$ we can bound the linear term as
	\begin{equation}\label{linear:termL2}
		\abs{\int_{\T} \mathcal{L}_{t}f f \ dx} \leq C\norm{f}_{L^{2}}^{2}. 
	\end{equation}
	Thus estimates \eqref{I1:I3} and \eqref{linear:termL2} yields
	\begin{equation}\label{estL2}
		\frac{1}{2}\frac{d}{dt}\norm{f}_{L^{2}}^{2}\leq C  \lambda(t) \left(1+ \norm{f}_{L^{\infty}}\right)\norm{f}_{L^2}^{2}.
	\end{equation}

	Next we compute the evoution of the $\dot{H^2}$ norm. To that purpose, testing \eqref{model2} against $f_{xxxx}$ and integrating by parts
	\[ 
	\frac{1}{2}\frac{d}{dt}\norm{f_{xx}}_{L^{2}}^{2}-\int_{\T} \mathcal{L}_{t}f_{xx}f_{xx} \ dx =J_{1}+J_{2}+J_{3}
	\]
	where
	\begin{align*} J_{1}=\frac{3}{2}\lambda(t)\int_{\T}(f_x&\mathcal{L}_tf)_x\mathcal{Q}_tf_{xxxx} \ dx, \quad J_{2}=3\lambda(t)\int_{\T} (\mathcal{L}_tf_xf)_x\mathcal{Q}_tf_{xxxx} \ dx, \\
		J_{3}&=	3(\frac{5}{2}+\alpha)\lambda(t)\int_{\T}(f_{x}f)_x \mathcal{Q}_{t}f_{xxxx} \ dx
	\end{align*}
	By means of relation \eqref{relation:QL}, we can write
	\begin{equation}
		J_{1}=\frac{3}{2(\frac{5}{2}+\alpha)}\ \int_{\T} \left( f_{xx}\mathcal{L}_{t}f + f_{x} \mathcal{L}_{t}f_{x}\right) \mathcal{L}_{t} f_{xx} \ dx.
	\end{equation}
	Hence, using H\"older's inequality, (ii) in Lemma \ref{ine:Sobolev} and (ii) in Lemma \ref{Lemma:Lp} we have that
	\begin{align}
		\abs{J_{1}} &\leq C\left( \norm{\mathcal{L}_{t}f}_{L^{\infty}}\norm{f_{xx}}_{L^2}\norm{\mathcal{L}_{t}f_{xx}}_{L^{2}}+\norm{f_{x}}_{L^4}\norm{\mathcal{L}_{t}f_{x}}_{L^{4}}\norm{\mathcal{L}_{t}f_{xx}}_{L^{2}}\right) \nonumber \\
		&\leq C \norm{f}_{L^{\infty}}\norm{f_{xx}}_{L^{2}}^{2}. \label{J1}
	\end{align}
	To estimate $J_{2}$ aand $J_{3}$ we follow the same strategy, we first write using \eqref{relation:QL}
	\[J_{2}= \frac{3}{(\frac{5}{2}+\alpha)}\ \int_{\T} \left( \mathcal{L}_{t}f_{xx}f + f_{x} \mathcal{L}_{t}f_{x}\right) \mathcal{L}_{t} f_{xx} \ dx, \]
	and
	\[J_{3}= 3\int_{\T} f_{x}^{2} \mathcal{L}_{t}f_{xx} + f f_{xx} \mathcal{L}_{t}f_{xx} \ dx.  \]
	Thus, as before, we readily check that
	\begin{align}
		\abs{J_{2}} &\leq C\left( \norm{f}_{L^{\infty}}\norm{\mathcal{L}_{t}f_{xx}}_{L^2}^{2}+\norm{f_{x}}_{L^4}\norm{\mathcal{L}_{t}f_{x}}_{L^{4}}\norm{\mathcal{L}_{t}f_{xx}}_{L^{2}}\right) \nonumber  \\
		&\leq C \norm{f}_{L^{\infty}}\norm{f_{xx}}_{L^{2}}^{2}. \label{J2}
	\end{align}
	and
	\begin{align}
		\abs{J_{3}} &\leq C\left( \norm{f_{x}}_{L^{4}}\norm{\mathcal{L}_{t}f_{xx}}_{L^2}+\norm{f}_{L^\infty}\norm{f_{xx}}_{L^{2}}\norm{\mathcal{L}_{t}f_{xx}}_{L^{2}}\right) \nonumber \\
		&\leq C \norm{f}_{L^{\infty}}\norm{f_{xx}}_{L^{2}}^{2}. \label{J3}
	\end{align}
	Collecting estimates \eqref{J1}-\eqref{J3} we have shown that
	\[ 
	\frac{1}{2}\frac{d}{dt}\norm{f_{xx}}_{L^{2}}^{2}-\int_{\T} \mathcal{L}_{t}f_{xx}f_{xx} \ dx \leq C \norm{f}_{L^{\infty}}\norm{f_{xx}}_{L^{2}}^{2}.
	\]
	As before,  (ii) of Lemma \ref{Lemma:Lp} shows that
	\[ \int_{\T} \mathcal{L}_{t}f_{xx} f_{xx} \ dx \leq C \norm{f_{xx}}^{2}_{L^2},\]
	and thus
	\begin{equation}\label{est:H2}
		\frac{1}{2}\frac{d}{dt}\norm{f_{xx}}_{L^{2}}^{2} \leq C \norm{f}_{L^{\infty}}\norm{f_{xx}}_{L^{2}}^{2}.
	\end{equation}
	Combining estimates \eqref{estL2} and \eqref{est:H2}, we find that
	\[
	\frac{1}{2}\frac{d}{dt} \|f(t)\|_{H^2}^2 \leq C \lambda(t) \left(1 + \|f(t)\|_{L^\infty} \right) \|f(t)\|_{H^2}^2.
	\]
	An application of Grönwall’s inequality yields
	\[
	\sup_{t \in [0,T]} \|f(t)\|_{H^2} \leq \|f_0\|_{H^2} \exp\left( \int_0^T \lambda(t) \left(1 + \|f(t)\|_{L^\infty} \right) dt \right),
	\]
	as long as the integral on the right-hand side remains finite.
	
	Now, let \( T^\star > T \) and suppose
	\[
	\int_0^{T^\star} \lambda(t) \left(1 + \|f(t)\|_{L^\infty} \right) dt < \infty.
	\]
	Then, by applying the previous estimate on any compact subinterval $[0, T_1] \subset [0, T^\star) $, we obtain a uniform bound on $ \|f(t)\|_{H^2} $. In particular, this implies that the \( H^2 \)-norm does not blow up as \( t \to T^\star \), and hence, by a standard continuation argument, the solution can be extended up to time \( T^\star \).  On the other hand, assume that the solution cannot be extended beyond time \( T^\star \). Then necessarily \( \|f(t)\|_{H^2} \to \infty \) as \( t \to T^\star \). Since the embedding \( H^2(\mathbb{T}) \hookrightarrow L^\infty(\mathbb{T}) \) is continuous, it follows that either \( \|f(t)\|_{L^\infty} \to \infty \), or it grows too fast to keep the integral finite. In either case,
	\[
	\int_0^{T^\star} \lambda(t) \left(1 + \|f(t)\|_{L^\infty} \right) dt = \infty.
	\]

\end{proof}

\begin{remark}
	When \( R(t) = e^{-3t} \), the equation \eqref{model2} reduces to the autonomous model \eqref{model2:aut}.	In this case, the continuation criterion stated in Theorem~\ref{cont:criterion} simplifies considerably, since the weight \( \lambda(t) = e^{(3+\alpha)t} \) becomes constant, i.e., \( \lambda(t) \equiv 1 \). Consequently, the solution to \eqref{model2:aut} can be continued beyond time \( T \) if and only if
	\[
	\int_0^{T^\star} \left(1 + \|f(t)\|_{L^\infty} \right)\, dt < \infty.
	\]
In the following section, we will apply this criterion in a numerical setting to investigate whether solutions to \eqref{model2:aut} exhibit finite-time blow-up.
\end{remark}

\section{Global in time result for the $f$-model equation}\label{sec:4}
In this section, we provide a global in time result for solutions to the $f$-model equation \eqref{model2} with $R(t)=e^{\alpha t}$ for specific values of the parameter $\alpha$. The result reads as follows:

\begin{theorem}\label{theorem:global}
Let $R(t)=e^{\alpha t}$ with $\alpha>-2 $. Let $f_{0}(x)\in H^{2}(\T)$ with mean zero. Then, if 
$\norm{f_0}_{H^2(\T)}$ is sufficiently small, 
 there exists a global-in-time unique global solution 
\[ f\in C\left([0,\infty); H^{2}(\T)\right), \]
of \eqref{model2} with $f(x,0)=f_{0}(x)$. 
\end{theorem}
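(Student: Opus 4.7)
The overall strategy combines an energy estimate yielding uniform smallness of $\|f(t)\|_{H^2}$ with the continuation criterion of Theorem~\ref{cont:criterion}. Since $R(t)=e^{\alpha t}$ makes the weight $\lambda(t) := \frac{K}{R_0} e^{(3+\alpha)t}$ continuous and hence integrable on every finite subinterval $[0,T^\star]$, that criterion reduces global existence to producing a uniform bound on $\|f(t)\|_{L^\infty}$, which I shall obtain from a uniform $H^2$ bound together with the one-dimensional Sobolev embedding $H^1(\T)\hookrightarrow L^\infty(\T)$. It is more convenient to work with the equivalent non-reduced form \eqref{model}, namely $f_t - \lambda(t) f_{xxt} - (\tfrac{5}{2}+\alpha)\lambda(t) f_{xx} = \mathcal{R}[f]$, where $\mathcal{R}[f]$ collects the three nonlinear contributions.

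The core of the argument is an energy identity adapted to the non-local elliptic operator $\mathrm{Id} - \lambda(t)\partial_x^2$. Define
\begin{equation*}
E_0(t) := \tfrac{1}{2}\|f\|_{L^2}^2 + \tfrac{\lambda(t)}{2}\|f_x\|_{L^2}^2, \qquad E_1(t) := \tfrac{1}{2}\|f_x\|_{L^2}^2 + \tfrac{\lambda(t)}{2}\|f_{xx}\|_{L^2}^2, \qquad \mathcal{E}(t):=E_0(t)+E_1(t).
\end{equation*}
Testing \eqref{model} against $f$, integrating by parts, and using $\lambda'(t)=(3+\alpha)\lambda(t)$ produces the cancellation
\begin{equation*}
\frac{dE_0}{dt} = \Big[\tfrac{3+\alpha}{2} - \tfrac{5}{2} - \alpha\Big]\lambda(t)\|f_x\|_{L^2}^2 + \int_\T f\,\mathcal{R}[f]\,dx = -\tfrac{\alpha+2}{2}\,\lambda(t)\|f_x\|_{L^2}^2 + \int_\T f\,\mathcal{R}[f]\,dx.
\end{equation*}
This algebraic identity is the source of coercive dissipation and accounts precisely for the hypothesis $\alpha>-2$. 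Differentiating \eqref{model} once in $x$ and testing against $f_x$ yields the analogous identity for $E_1$, since every operator in \eqref{model} commutes with $\partial_x$.

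Each nonlinear term in $\mathcal{R}[f]$ has the form $\lambda(t)\,\partial_x(\text{bilinear in }f,\mathcal{L}_t f)$; after moving the outer derivative onto the test function, each of the resulting integrals $\int f_x^2\,\mathcal{L}_t f$, $\int f f_x\,\mathcal{L}_t f_x$, $\int f_{xx}^2\,\mathcal{L}_t f$, and their kin at the $E_1$ level is bounded by $C\lambda(t)\|f\|_{H^2}\big(\|f_x\|_{L^2}^2+\|f_{xx}\|_{L^2}^2\big)$ using the $L^p$- and $H^s$-bounds on $\mathcal{L}_t$ and $\mathcal{Q}_t$ from Lemmas~\ref{Lemma:Lp} and \ref{Lemma:Sobolev}, Sobolev embedding, and Poincaré's inequality on zero-mean functions. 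Combining with the dissipative identities gives
\begin{equation*}
\frac{d\mathcal{E}}{dt} + \Big(\tfrac{\alpha+2}{2} - C\|f(t)\|_{H^2}\Big)\lambda(t)\big(\|f_x\|_{L^2}^2+\|f_{xx}\|_{L^2}^2\big) \leq 0.
\end{equation*}
Since $\lambda(t)\geq\lambda(0)>0$ for $\alpha>-2$, one has $\|f\|_{H^2}^2\leq C_0\,\mathcal{E}(t)$ and $\mathcal{E}(0)\leq C_1\|f_0\|_{H^2}^2$ with $C_0,C_1$ depending only on $\lambda(0)$. A standard continuity/bootstrap argument then closes the proof: choose $\eta<(\alpha+2)/(4C)$ and take $\|f_0\|_{H^2}$ small enough that $\sqrt{C_0C_1}\|f_0\|_{H^2}<\eta/2$. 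On the maximal existence interval $[0,T_{\max})$ from Theorem~\ref{result:th}, the inequality above forces $\mathcal{E}(t)\leq \mathcal{E}(0)$ as long as $\|f\|_{H^2}(t)\leq \eta$, hence $\|f(t)\|_{H^2}<\eta/2$, and continuity propagates this bound throughout $[0,T_{\max})$. The uniform $H^2$- (hence $L^\infty$-) bound together with the finiteness of $\int_0^{T^\star}\lambda(t)\,dt$ on every bounded interval then forces $T_{\max}=\infty$ via Theorem~\ref{cont:criterion}, and uniqueness follows from local uniqueness applied on each finite subinterval. The main technical obstacle will be the detailed verification that \emph{every} nonlinear term—in particular those at the $E_1$ level, where an additional derivative lands on $\mathcal{L}_t f$—obeys the stated bound with a constant independent of $t$; this relies on exploiting the identity $\mathcal{L}_t=(\tfrac{5}{2}+\alpha)(\mathcal{Q}_t-\mathrm{Id})$ of Lemma~\ref{Lemma:Lp}(i) to trade $\mathcal{L}_t$ for the time-uniformly bounded operator $\mathcal{Q}_t$ without introducing spurious powers of $\lambda(t)$.
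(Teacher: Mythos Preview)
Your proposal is correct and follows essentially the same approach as the paper. Your energy $E_1(t)=\tfrac12\|f_x\|_{L^2}^2+\tfrac{\lambda(t)}{2}\|f_{xx}\|_{L^2}^2$ is (up to a factor of two) exactly the paper's functional $\mathcal{E}(t)=\|f_x\|_{L^2}^2+\lambda(t)\|f_{xx}\|_{L^2}^2$, obtained there by testing \eqref{model} against $-f_{xx}$; the key algebraic cancellation $\tfrac{5}{2}+\alpha-\tfrac{3+\alpha}{2}=1+\tfrac{\alpha}{2}$ that produces the dissipative coefficient, the nonlinear bound $C\lambda(t)\|f\|_{L^\infty}\|f_{xx}\|_{L^2}^2$, and the bootstrap closing are identical. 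Your inclusion of the additional $E_0$ piece is harmless but redundant, since the zero-mean assumption and Poincar\'e already recover $\|f\|_{L^2}$ from $\|f_x\|_{L^2}$; on the other hand, your explicit invocation of the continuation criterion (Theorem~\ref{cont:criterion}) to pass from the uniform energy bound to $T_{\max}=\infty$ is a step the paper leaves implicit.
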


\begin{remark}
The restriction on the parameter \( \alpha \), such as \( \alpha > -2 \), ensures that the exponential weight in the energy functional contributes a positive term. This can be interpreted as an effective dissipation quantity that helps control the evolution of solutions over time. The proof of Theorem~\ref{theorem:global} makes explicit the smallness condition required on the initial data. Specifically, it shows that the norm $\norm{f_0}_{H^2(\T)}<\delta$ where $\delta$ is a  positive constant that depends only on fixed parameters of the problem.
\end{remark}

\begin{proof}[Proof of Theorem \ref{theorem:global}]
The existence of a local-in-time unique solution up to a certain maximal time of existence $T>0$ is by Theorem \ref{result:th}. In the following we will show that we can derive a global-in-time control of the $\dot{H}^{2}$ norm of the solution for $\alpha>-2 $. First, notice that for $R(t)=e^{\alpha t}$, equation \eqref{model2} is equivalent to
\begin{multline}\label{modified:eq}
f_t-\frac{K}{R_0}e^{(3+\alpha)t}f_{xxt}-(\frac{5K}{2R_0}+\frac{K\alpha}{R_0})e^{(3+\alpha)t}f_{xx}\\=\frac{3K}{2R_0}e^{(3+\alpha)t}(f_x\mathcal{L}_tf)_x+3\frac{K}{R_0}e^{(3+\alpha)t}(\mathcal{L}_tf_xf)_x
+3(\frac{5K}{2R_0}+\frac{K\alpha}{R_0})e^{(3+\alpha)t}(f_{x}f)_x.
\end{multline}
Testing agains $-f_{xx}$ and integrating by parts, we find that
\begin{equation}\label{eq:estimatcion}
\frac{1}{2}\frac{d}{dt}\norm{f_{x}}_{L^2}^{2} + \frac{K}{R_0}e^{(3+\alpha)t}\left( \int_{\T} f_{xxt} f_{xx} \ dx \right) + \frac{K}{R_0}e^{(3+\alpha)t}\left(\frac{5}{2}+\alpha\right)\norm{f_{xx}}_{L^2}^{2}= K_{1}+K_{2}+K_{3},
\end{equation}
with 
\[ K_{1}=- \frac{3K}{2R_0}e^{(3+\alpha)t} \int_{\T} (f_x\mathcal{L}_tf)_x f_{xx} \ dx, \quad \quad  K_{2}=-3\frac{K}{R_0}e^{(3+\alpha)t}\int_{\T}(\mathcal{L}_tf_xf)_x f_{xx} \ dx \]
and
\[ K_{3}= -3\frac{K}{R_0}\left(\frac{5}{2}+\alpha\right)e^{(3+\alpha)t}\int_{\T}(f_{x}f)_x f_{xx} \ dx. \]
Next, we notice that the second term on the left hand side in \eqref{eq:estimatcion} can be written as
\begin{equation}
\frac{K}{R_0}e^{(3+\alpha)t}\left( \int_{\T} f_{xxt} f_{xx} \ dx \right)=\frac{d}{dt} \left( \frac{K}{2R_0}e^{(3+\alpha)t} \norm{f_{xx}}_{L^{2}}^{2}\right)-\frac{K(3+\alpha)}{2R_0}e^{(3+\alpha)t} \norm{f_{xx}}_{L^2}^{2}.
\end{equation}
Thus, \eqref{eq:estimatcion} takes the form
\begin{equation*}
\frac{1}{2}\frac{d}{dt}\bigg[ \norm{f_{x}}_{L^2}^{2}+ \left( \frac{K}{R_0}e^{(3+\alpha)t} \norm{f_{xx}}_{L^{2}}^{2}\right) \bigg]+\gamma e^{(3+\alpha)t} \norm{f_{xx}}_{L^2}^{2}= K_{1}+K_{2}+K_{3},
\end{equation*}
where $\gamma=\frac{K}{R_0}\left(1+\frac{\alpha}{2}\right)$. Notice that $\gamma>0$ as long as $\alpha>-2$. In order to bound the nonlinear terms $K_{1},K_{2}$ and $K_{3}$ we estimate them similarly as we bounded $J_{1},J_{2}$ and $J_{3}$ in the proof of Theorem \ref{cont:criterion}. Indeed, by using (i) in Lemma \ref{ine:Sobolev} and (iii) in Lemma \ref{Lemma:Lp} we find that
\[ \abs{K_1+K_{2}+K_{3}}\leq C \frac{3K}{2R_0}e^{(3+\alpha)t} \left( \norm{f}_{L^{\infty}}\norm{f_{xx}}_{L^2}^{2}\right).\]
Therefore, denoting by
\[ \mathcal{E}(t)= \norm{f_{x}}_{L^2}^{2}+ \frac{K}{R_0}e^{(3+\alpha)t} \norm{f_{xx}}_{L^{2}}^{2}, \quad \mathcal{D}(t)=e^{(3+\alpha)t} \norm{f_{xx}}_{L^2}^{2}\]
and using the Sobolev embedding (ii) in Lemma \ref{ine:Sobolev}, we infer that
\begin{equation}\label{des:energia}
\frac{d}{dt}\mathcal{E}(t)+\gamma \mathcal{D}(t) \leq C\mathcal{D}(t) \sqrt{\mathcal{E}(t)}.
\end{equation}

Our goal is to show that if the initial energy \( \mathcal{E}(0) \) is sufficiently small, then \( \mathcal{E}(t) \) remains small for all time, and in fact decays.  Let us define the threshold $\delta = \frac{\gamma}{2C}$ and assume that
$ \mathcal{E}(0) < \delta^2$.
We define
\[
T^* = \sup\left\{ T > 0 : \mathcal{E}(t) < \delta^2 \text{ for all } t \in [0, T) \right\}.
\]
Suppose, by contradiction, that $T^\star, < \infty $. Then, by the definition of $T^\star$, we have
\[
\mathcal{E}(t) < \delta^2 \quad \text{for all } t \in [0, T^\star,).
\]
Inserting this into inequality~\eqref{des:energia}, we obtain
\[
\frac{d}{dt} \mathcal{E}(t) \leq \left( C \sqrt{\mathcal{E}(t)} - \gamma \right) \mathcal{D}(t) \leq \left( C \delta - \gamma \right) \mathcal{D}(t) = -\frac{\gamma}{2} \mathcal{D}(t) \leq 0.
\]
Thus, \( \mathcal{E}(t) \) is non-increasing on \( [0, T^*) \), and in particular,
\[
\mathcal{E}(t) \leq \mathcal{E}(0) < \delta^2 \quad \text{for all } t \in [0, T^*).
\]

Since \( \mathcal{E}(t) \) is continuous, we may take the limit as \( t \nearrow T^* \) and obtain
\[
\mathcal{E}(T^*) = \lim_{t \nearrow T^*} \mathcal{E}(t) \leq \mathcal{E}(0) < \delta^2.
\]
Therefore, there exists \( \varepsilon > 0 \) such that
\[
\mathcal{E}(T^*) < \delta^2 - \varepsilon.
\]
By continuity of \( \mathcal{E}(t) \), there exists \( \eta > 0 \) such that
\[
\mathcal{E}(t) < \delta^2 \quad \text{for all } t \in [T^*, T^* + \eta),
\]
contradicting the definition of \( T^* \) as the supremum of times for which \( \mathcal{E}(t) < \delta^2 \). Hence, we conclude that
\[
\mathcal{E}(t) < \delta^2 \quad \text{for all } t \geq 0.
\]

\end{proof}

%%%%%%%%%%%%%%%%%%%%%%%%%%%%%%%
%%%%%%%%%%%%%%%%%%%%%%%%%%%%%%%

\section{Numerical simulations}\label{sec:5}

%Such numerical simulations have been obtained using a Fourier collocation algorithm to discretize in space and the Runge-Kutta 4(5) to integrate in time.
%
%If we run such an algorithm to approximate the solution corresponding to the initial data
%$$
%f_0(x)=\sin(2x),
%$$
%with $N=2^{16}$ spatial nodes we find the evolution in Figure 1.
%\begin{figure}[ht!]
%\includegraphics[width=\textwidth]{Example1.eps}
%\caption{Evolution of the $f-$model}
%\end{figure}

In this section, we discuss some numerical simulations of the models under study. In
practical applications, physical values for $K$ in equations (\ref{biofilm_orig}), (\ref{biofilm}), 
(\ref{biofilm_orig_x}),  (\ref{biofilm_x}) are of order $K\sim 10^{-5}$. As commented
in the introduction, equation (\ref{biofilm_orig}) with $\delta=0$ is known to have approximate
selfsimilar solutions \cite{entropy} of the form 
$h =   {e^{t}  \over (R(t)/R_0)^2} \left( 1-{3 \over 2} {r^2 \over R(t)^2}\right)^{1/3},$
for specific choices of $R(t)$, see Figure \ref{selfsimilar} (a). 
Dividing by $e^t$, we obtain scaled profiles which
are approximate solutions of (\ref{biofilm}). 
These solutions are generated for the choice
$R(t)/R_0 = R_s(t) =\left(1 + {7 \over 3} \left(1+{3\over 2}\right) K (e^{3t} -1)\right)^{1/7}$, see \cite{entropy}.
Matching these profiles with tails of small height $h_{\rm inf}$, we can insert them as initial 
conditions in (\ref{biofilm}) and obtain solutions composed of a central part displaying 
self-similar behavior and two tails of height $h_{\rm inf}$.
A similar phenomenon is observed when the initial condition is a peak, see 
Figure \ref{bump2} (a) for $R(t)/R_0 = R_s(t) \sim e^{3t/7}$. Similar patterns arise for other
choices of $R(t) = e^{\alpha t}$. We illustrate the results when $\alpha =1$ and $\alpha=-1$,
satisfying the condition $\alpha > -2$.

\begin{figure}[!hbt]
\hskip 0.5cm (a) \hskip 7cm (b) \\
\includegraphics[width=7.5cm]{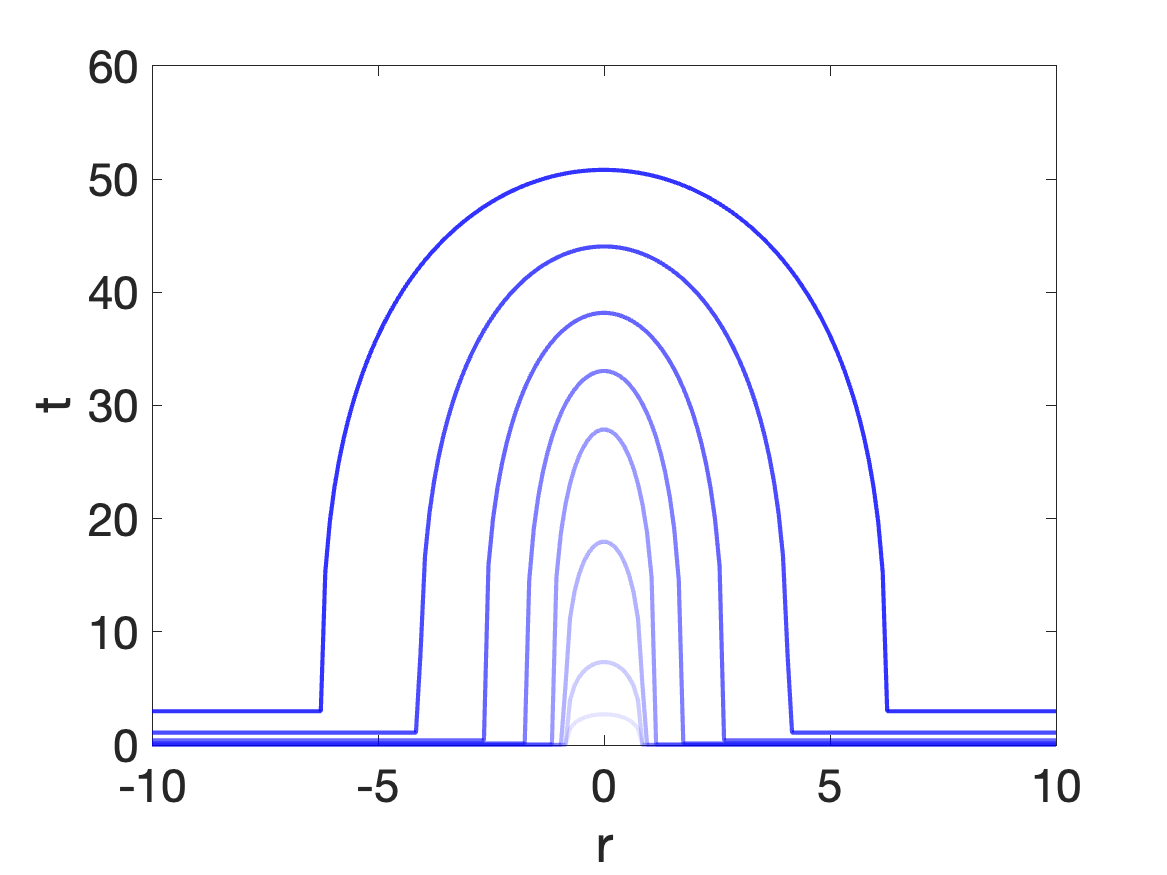}
\includegraphics[width=7.5cm]{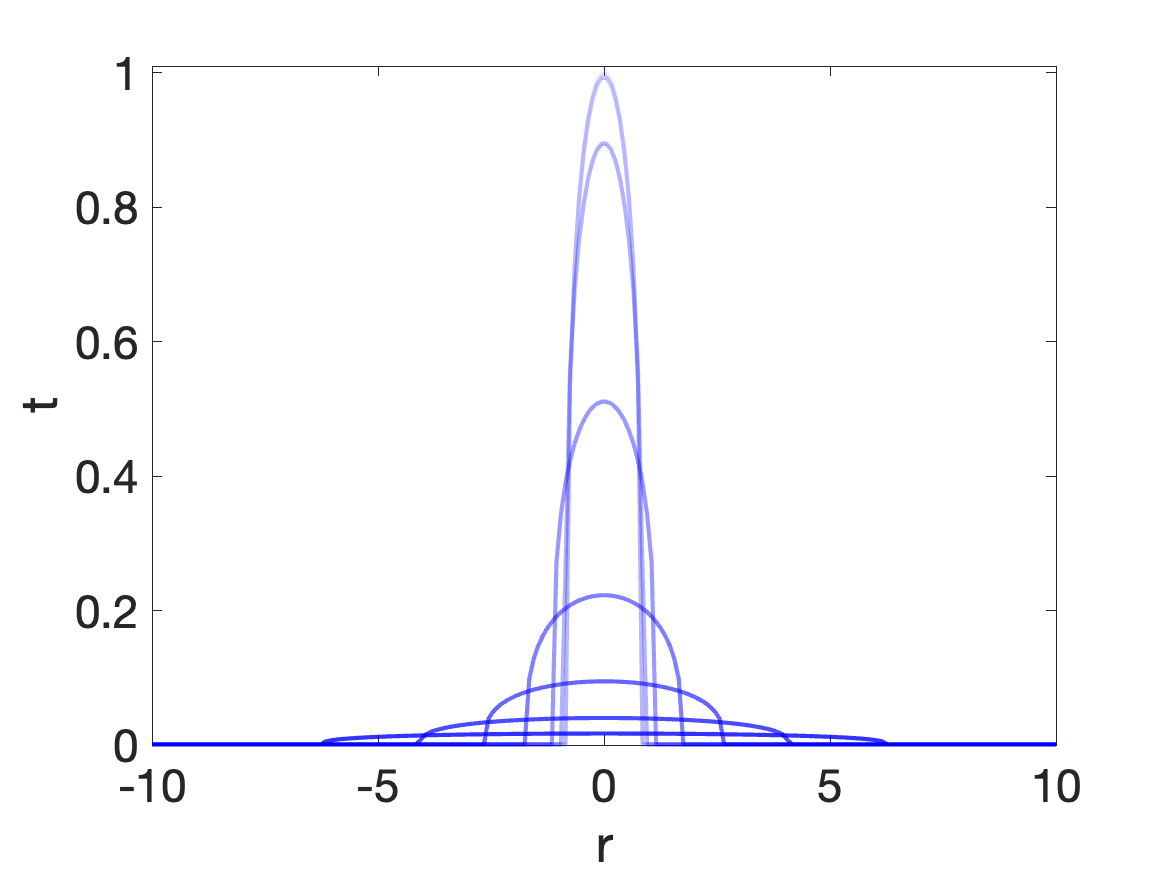}
\caption{(a) Selfsimilar solution of (\ref{biofilm_orig}) with 
$ R(t)/R_0 = R_s(t)$ at 
times $1,2,3,4,5,6,7,8$. The line color darkens slightly as $t$ grows.
(b) Scaled height profiles $\tilde h$ after division by $e^t$. The 
lighter profile corresponds to $t=1$.
Parameters: $\delta =0$, $K=10^{-5}$.}
\label{selfsimilar}
\end{figure}

In the evolution of these initial peaks, we observe a competition between the dynamics of the upper
part, that moves upwards, and the dynamics of the two lateral fronts, that advance sidewards.
Initially, the upward deformation dominates. In the two tests with $R(t) \sim e^{\alpha t}$, 
$\alpha>0$, we identify a threshold time after which sidewards motion becomes relevant and
slows down (even diminishes) the height variation. The central part flattens. As $\alpha$
decreases, the lateral fronts remain essentially pinned for longer times, while the height grows.
Studying the long time behavior of these patterns numerically is a challenge due to the need to
reduce the time step as the exponential factors present in the equation grow.
For the simulation considered in Figure \ref{bump2} (b) and (e), numerical artifacts are observed: small amplitude oscillations develop in the central area, which becomes flat but wavy.
This artifacts are delayed as we reduce the time step $dt$. 
%Oscillations remain small and bounded when $\alpha >0$ is large enough, as it happens for 
%$R(t)=e^t$.  Not really for the other one unless we decrease the step further.
For the simulation considered in Figure \ref{bump2} (c) and (f), the step gradients at both sides
can lead to numerical instability as time grows if we keep the steps fixed.
A similar phenomenon is observed working with (\ref{biofilm_orig_x}) and  (\ref{biofilm_x}) 
instead, see Figure \ref{bump3}.

\begin{figure}[!hbt]
\hskip 0.5cm (a) \hskip 4cm (b) \hskip 4cm (c)\\
\includegraphics[width=5cm]{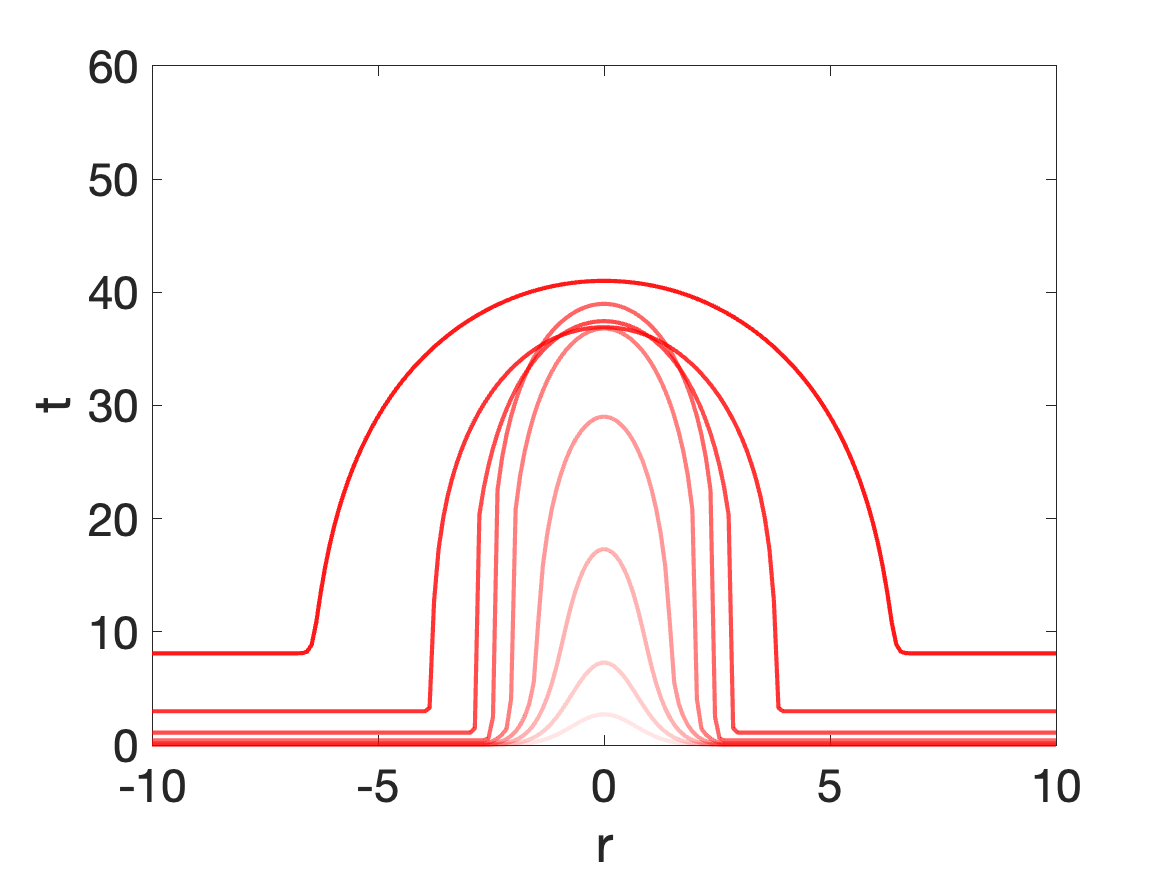}
\includegraphics[width=5cm]{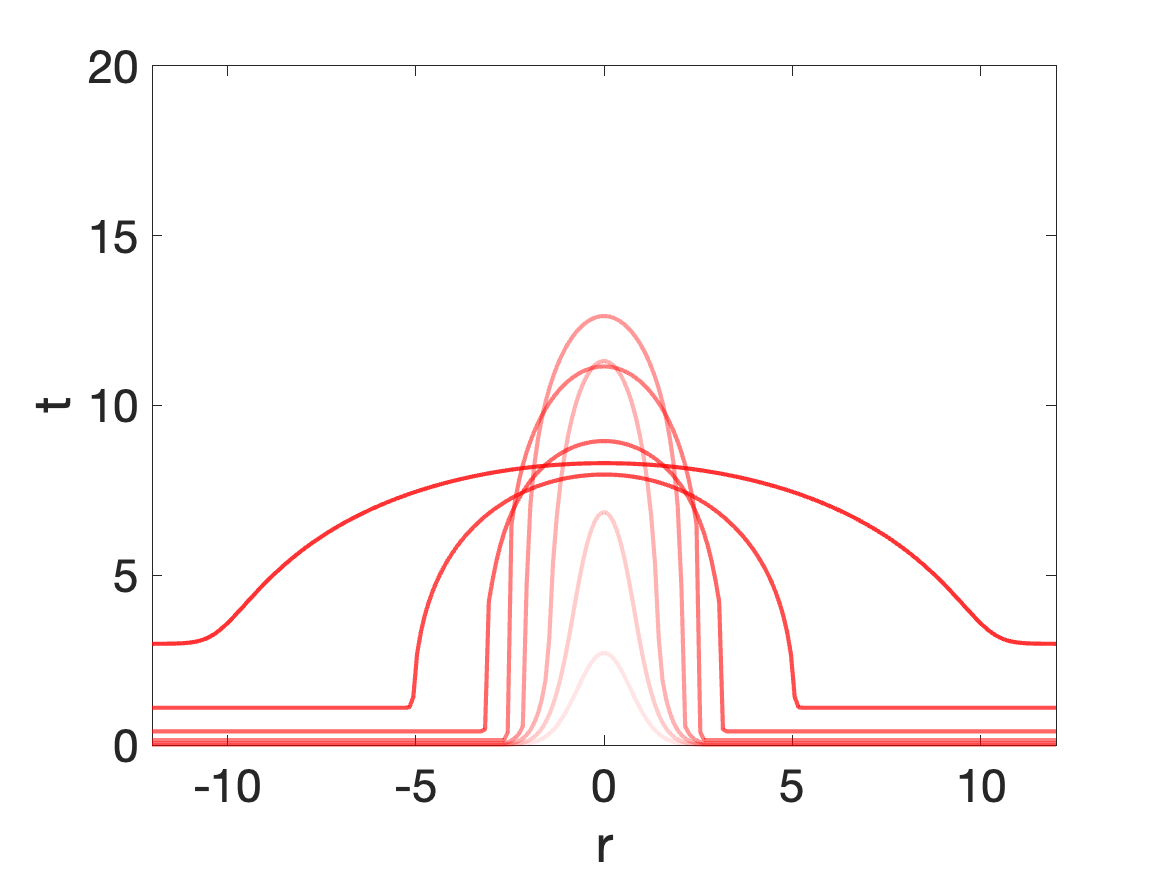}
\includegraphics[width=5cm]{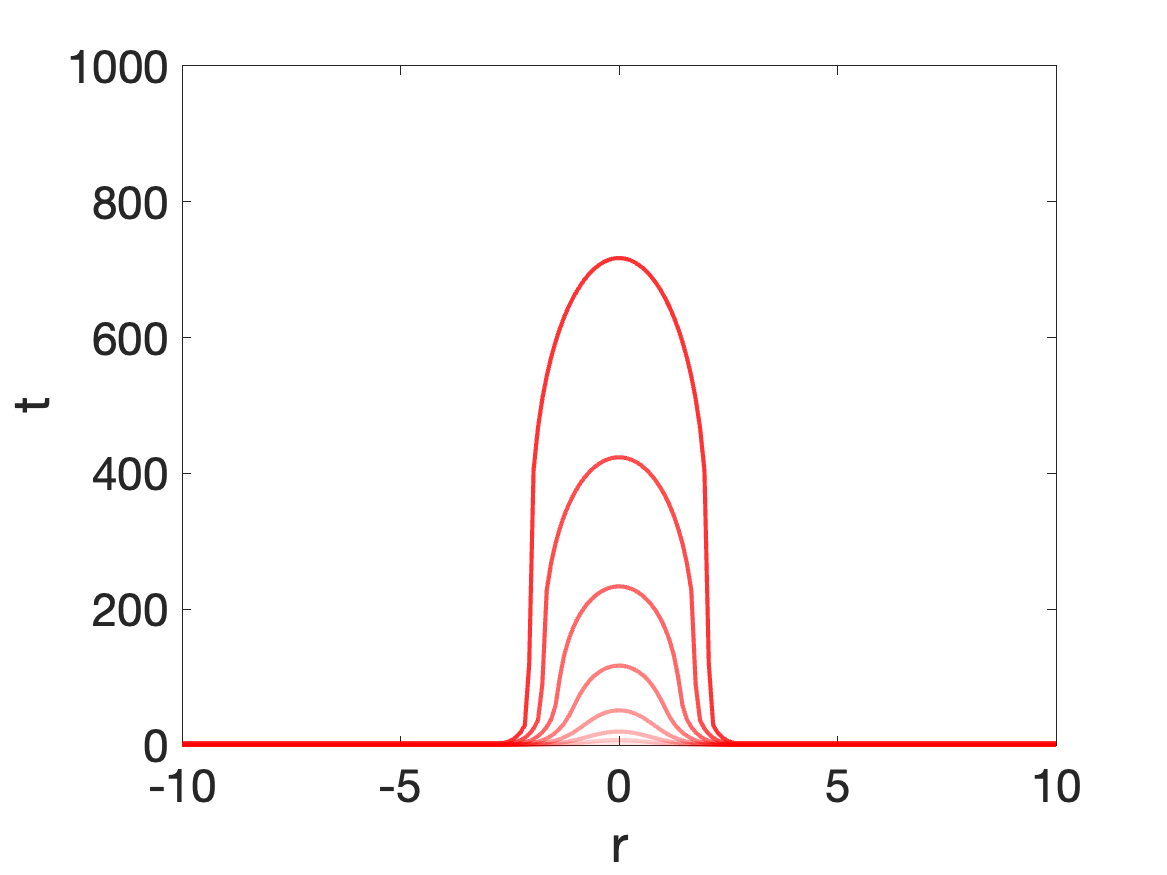} \\
\hskip 0.5cm (d) \hskip 4cm (e) \hskip 4cm (f)\\
\includegraphics[width=5cm]{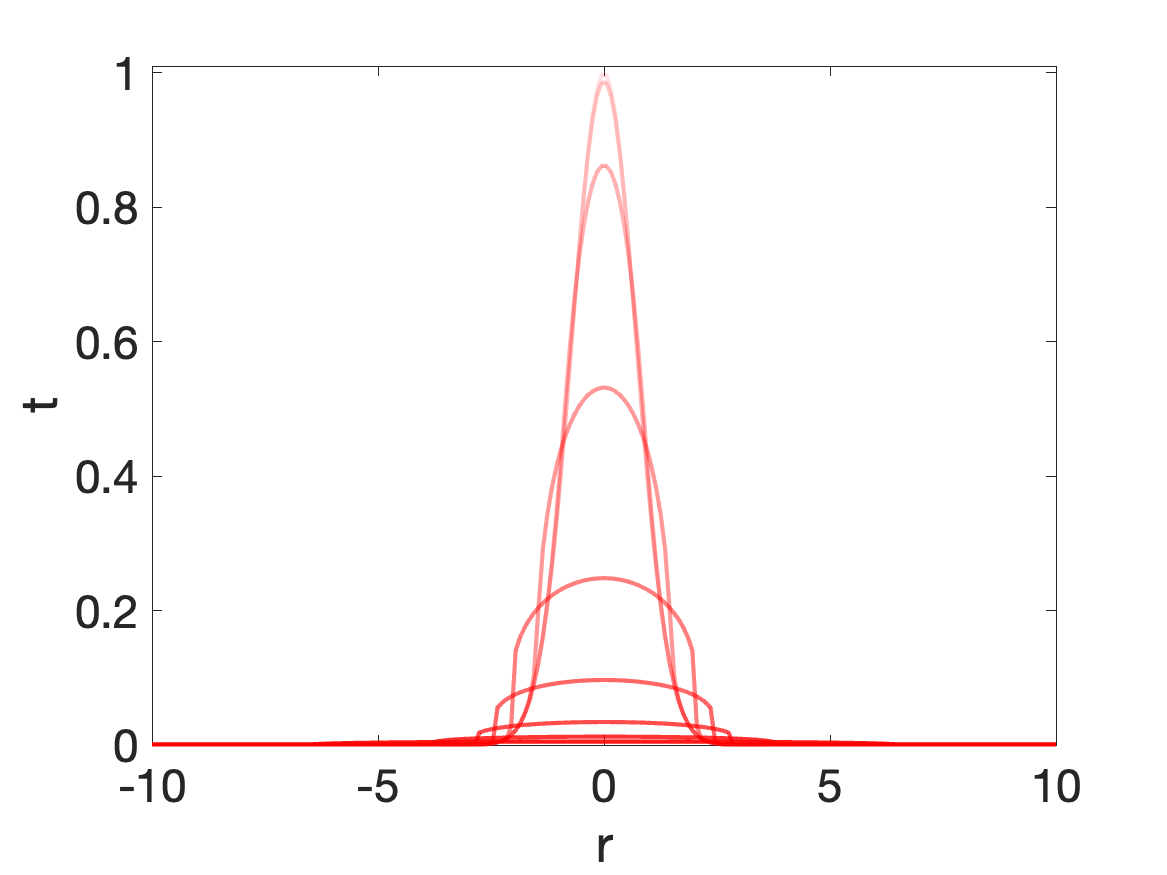}
\includegraphics[width=5cm]{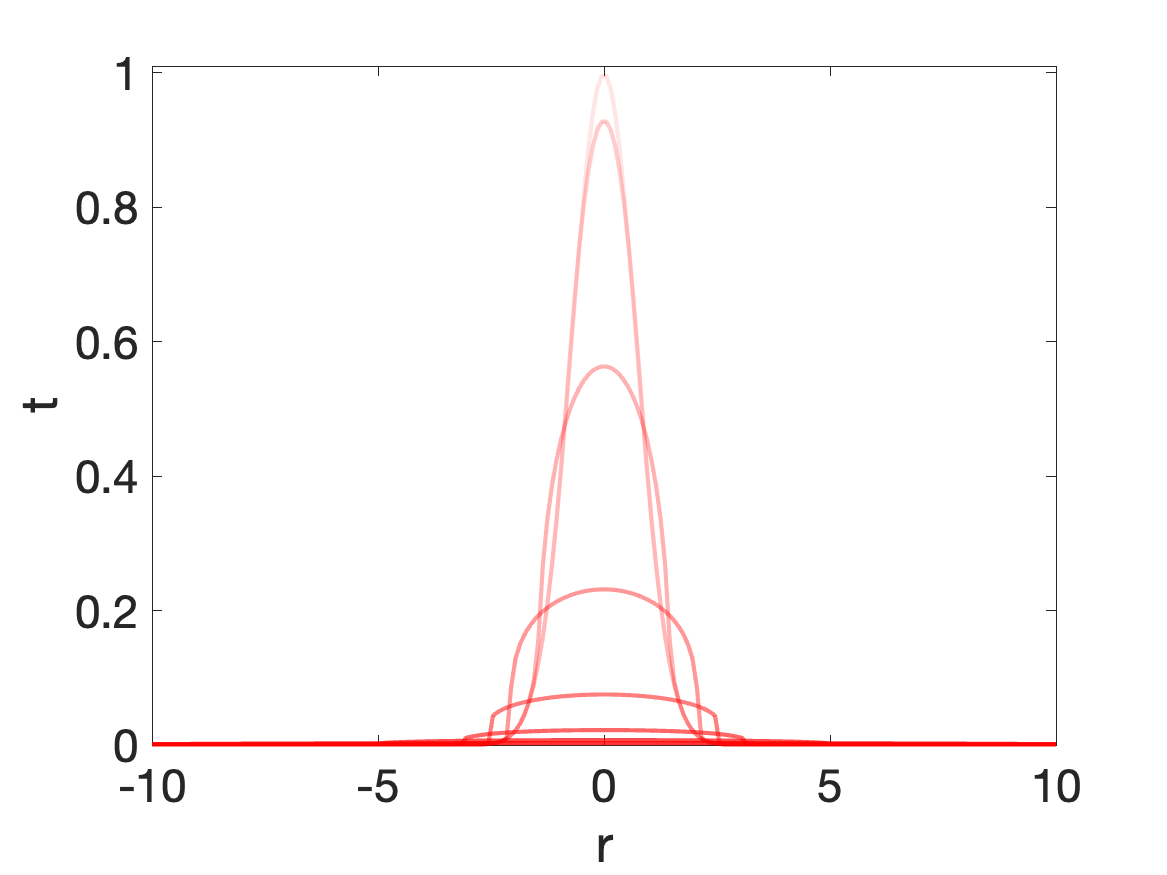}
\includegraphics[width=5cm]{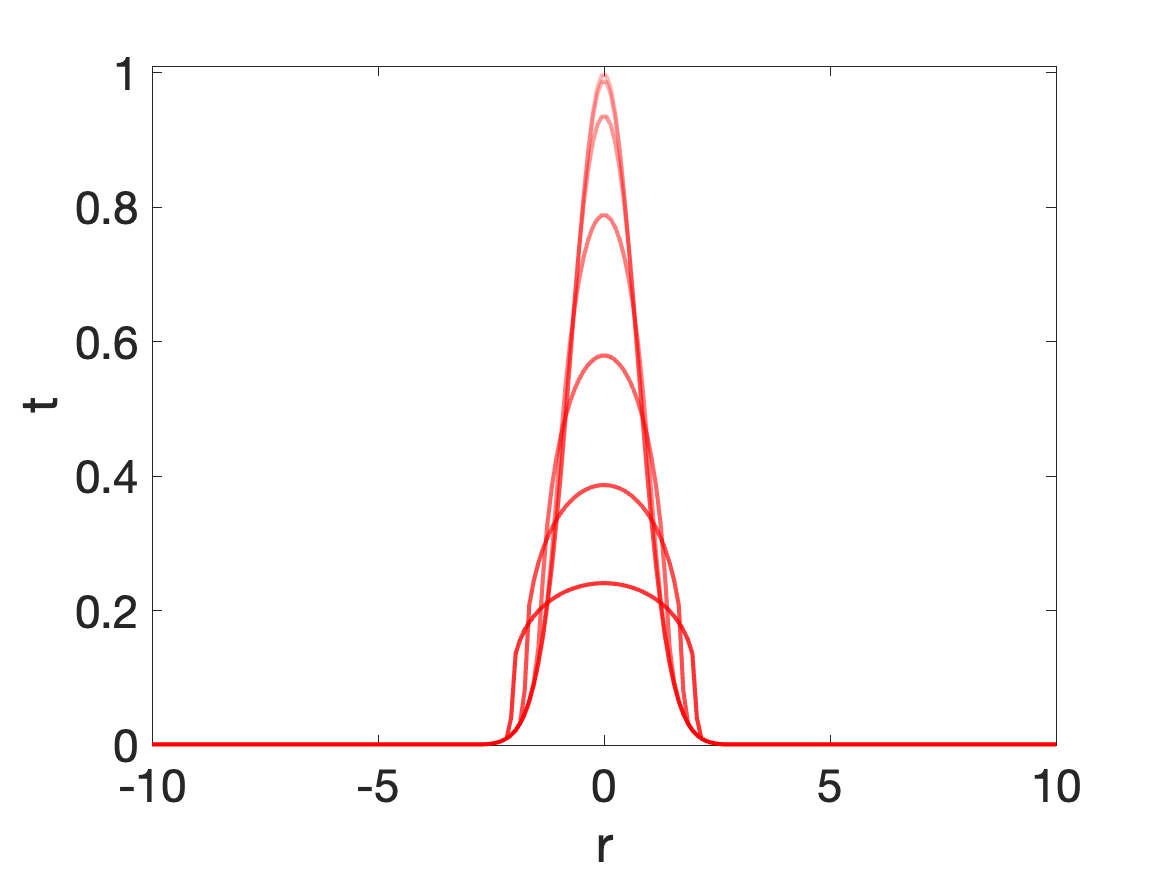} \\
\caption{Snapshots of numerical solutions of  (\ref{biofilm_orig}) for 
(a)  $R(t)/R_0 = R_s(t)$, (b) $R(t)/R_0 = e^t$, (c) $R(t)/R_0 = e^{-t}$
taken at times $1,2,3,4,5,6,7,8$, starting from $e^{-r^2}$ at $t=0$.
(d)-(e)
Associated scaled height profiles $\tilde h$ after division by $e^t$. 
The line color darkens slightly as $t$ grows.
Parameters: $\delta =0$,  $K=10^{-5}$, $h_{\rm inf}= 10^{-3}$,
$dr= 10^{-1}$ and $dt = 5 \cdot 10^{-4}, 2 \cdot 10^{-4}, 5 \cdot 10^{-4}$, 
respectively.}
\label{bump2}
\end{figure}

\begin{figure}[!hbt]
\hskip 0.5cm (a) \hskip 4cm (b) \hskip 4cm (c)\\
\includegraphics[width=5cm]{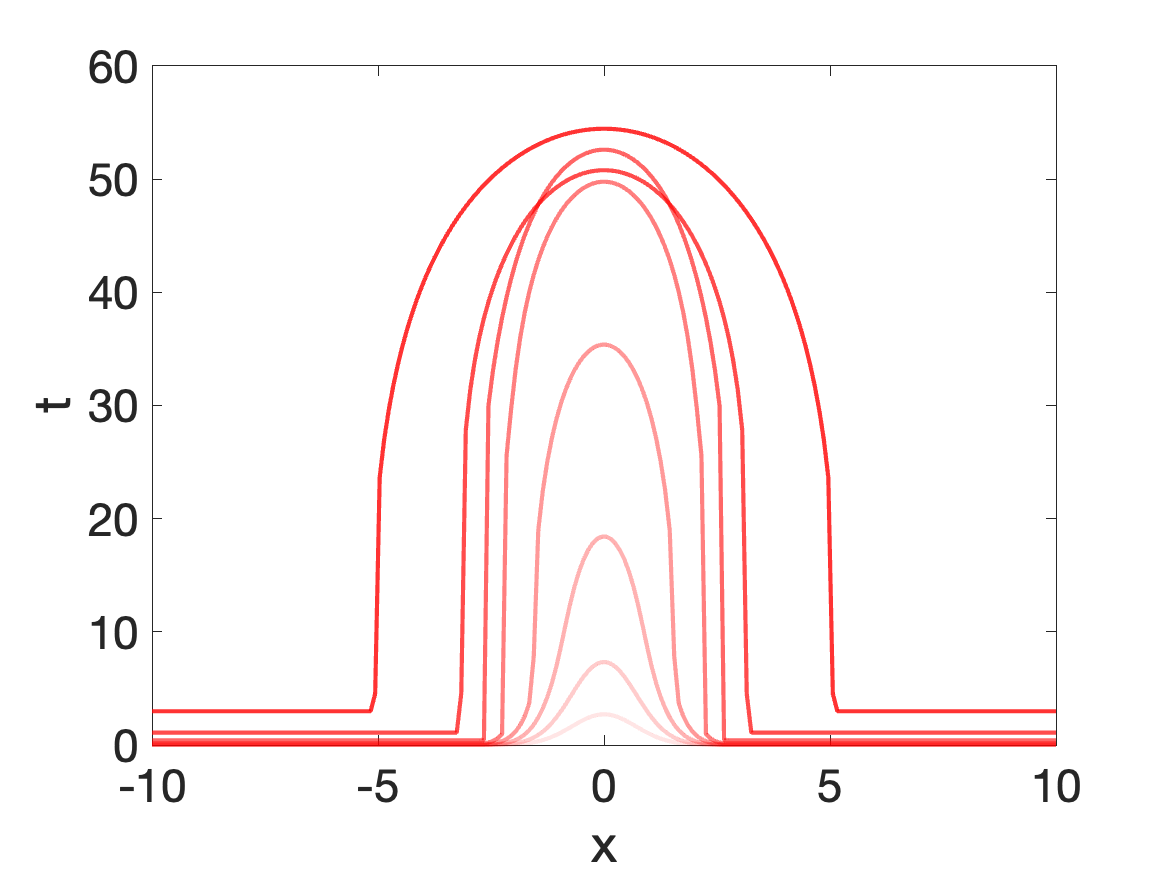}
\includegraphics[width=5cm]{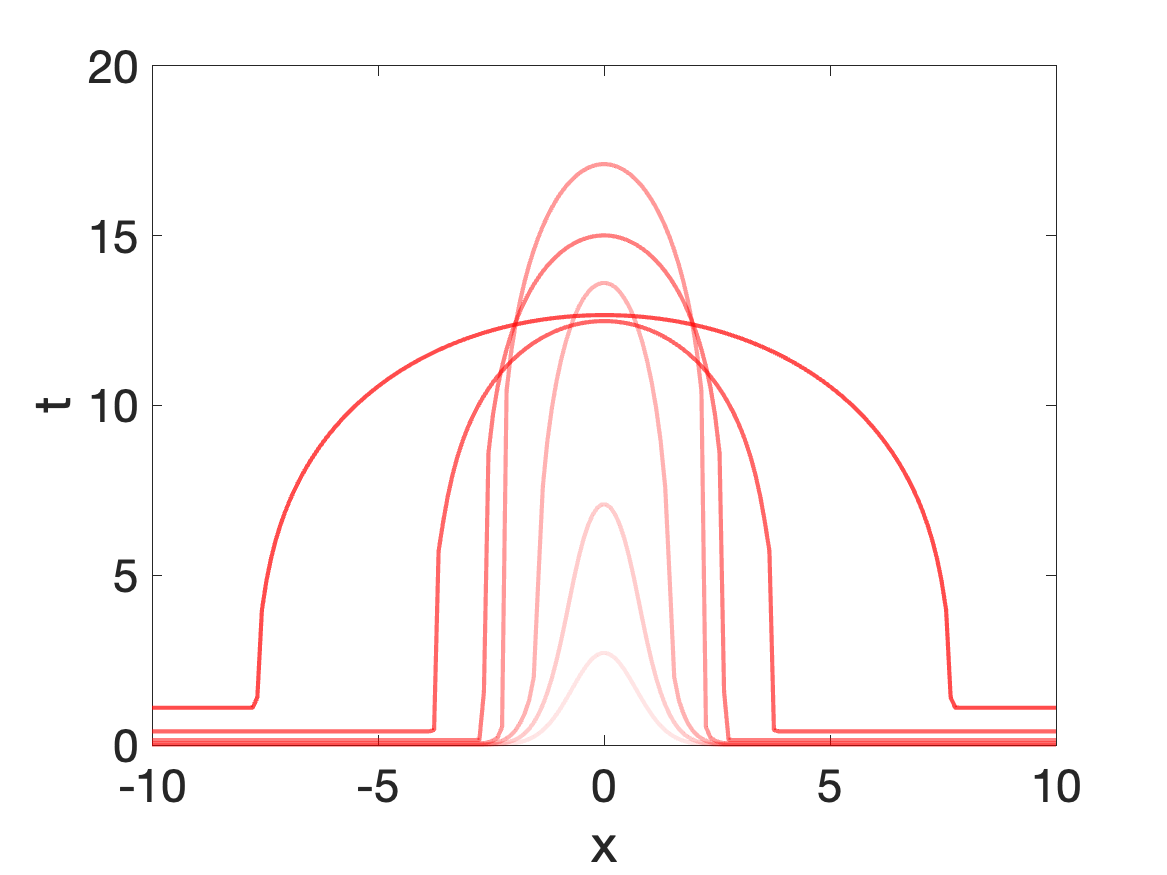}
\includegraphics[width=5cm]{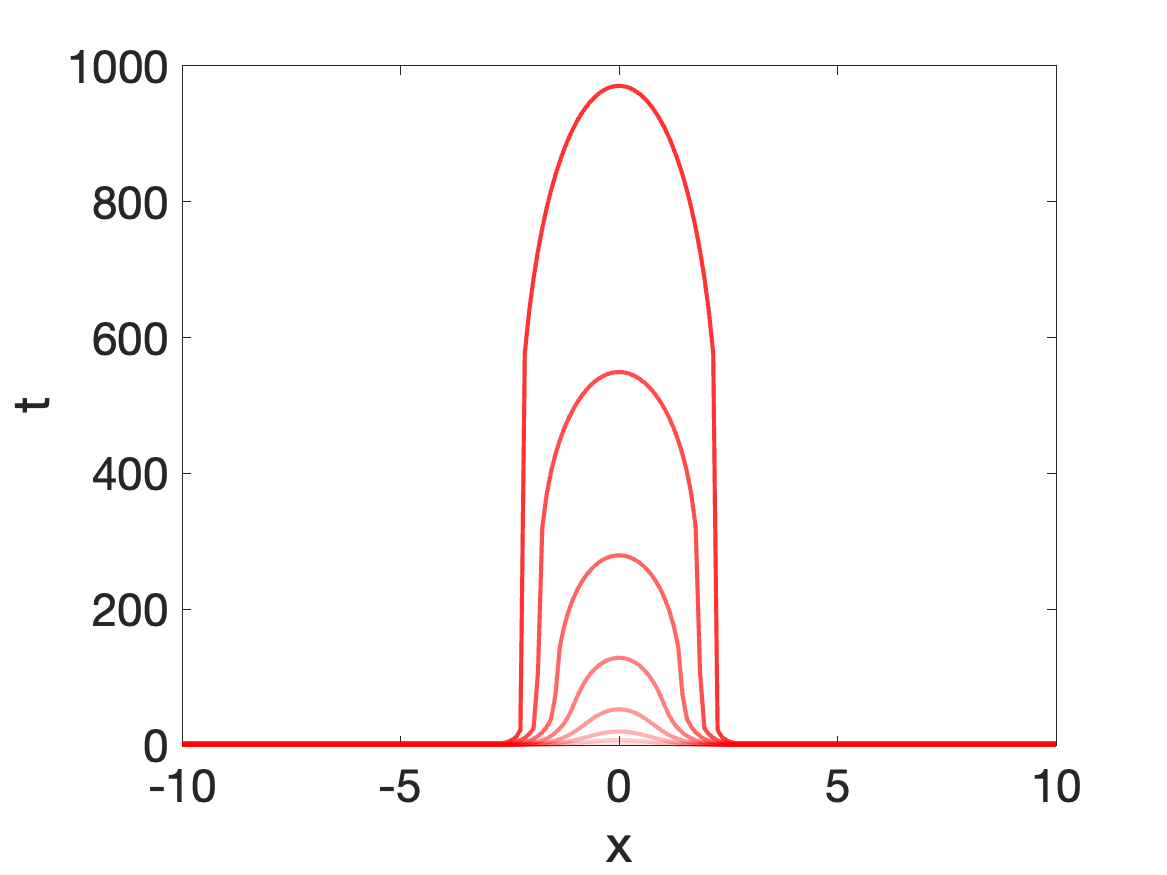} \\
\hskip 0.5cm (d) \hskip 4cm (e) \hskip 4cm (f)\\
\includegraphics[width=5cm]{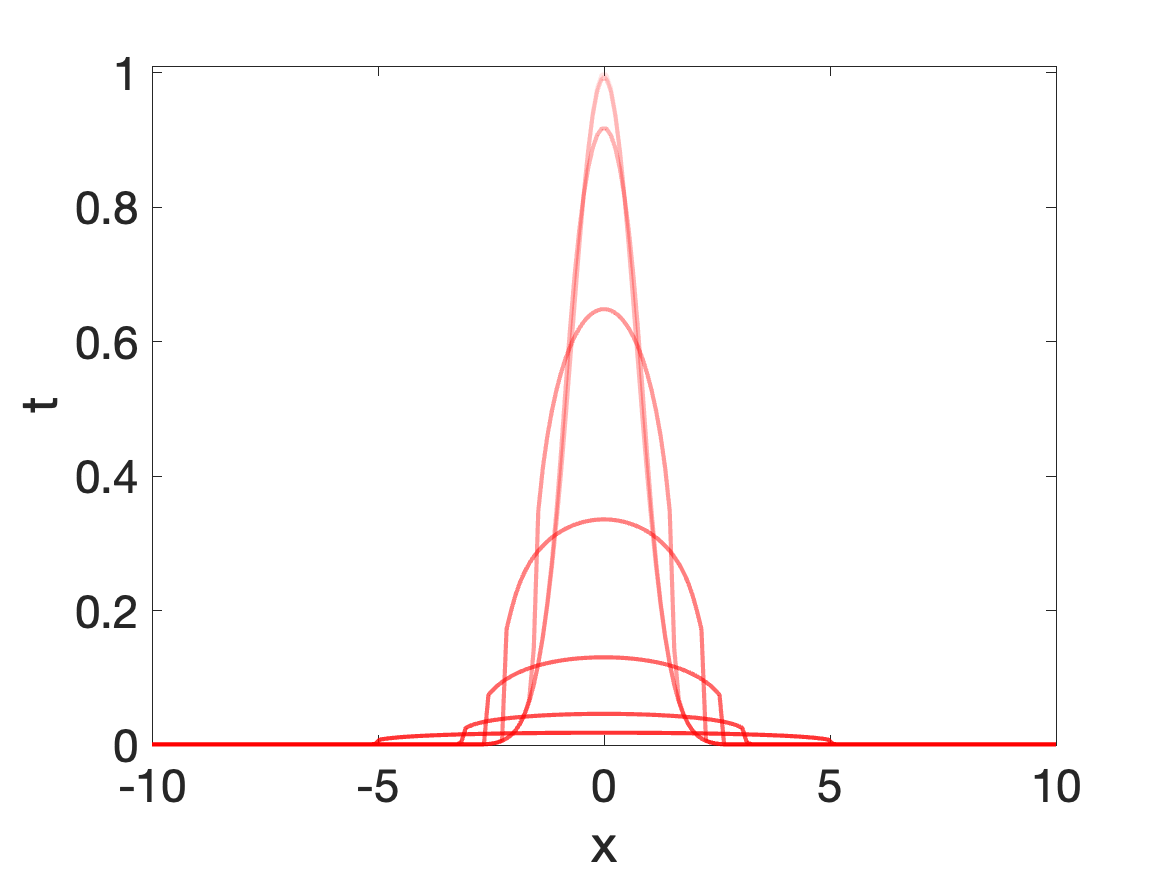}
\includegraphics[width=5cm]{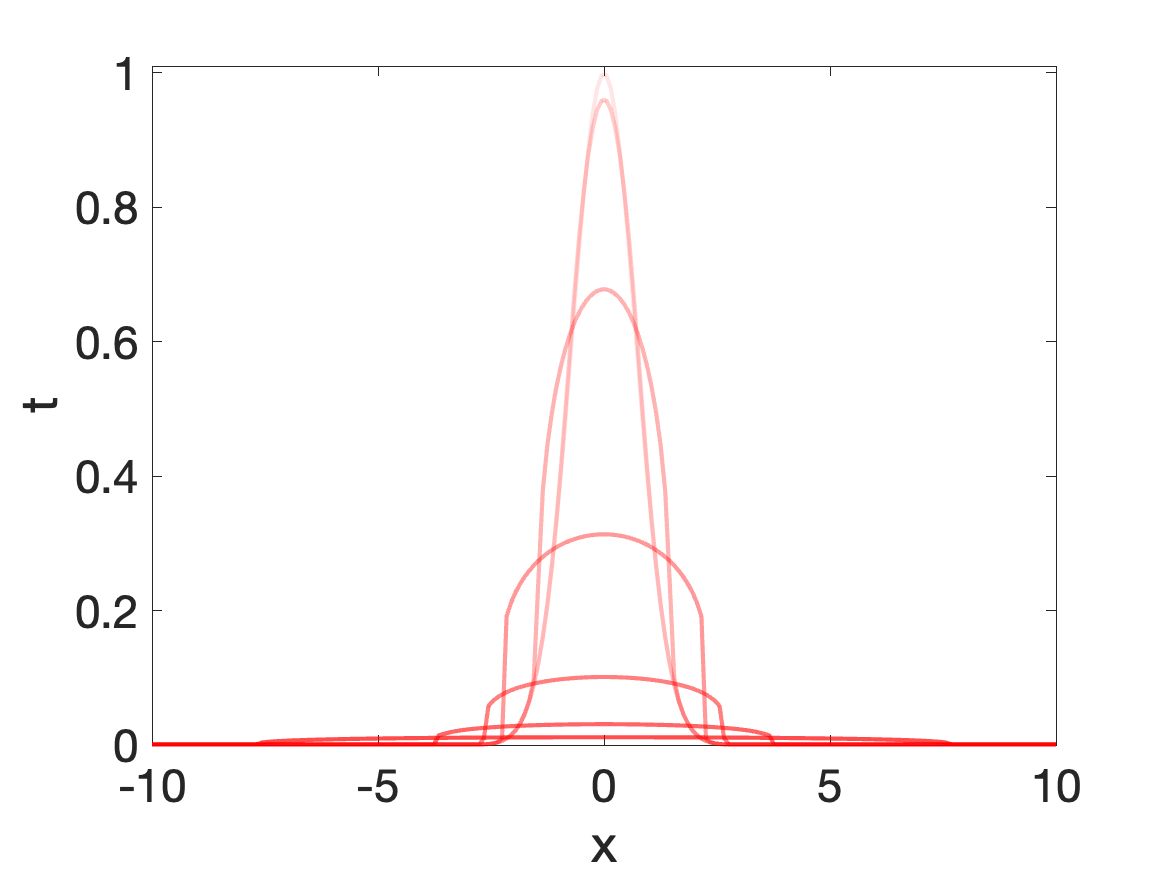}
\includegraphics[width=5cm]{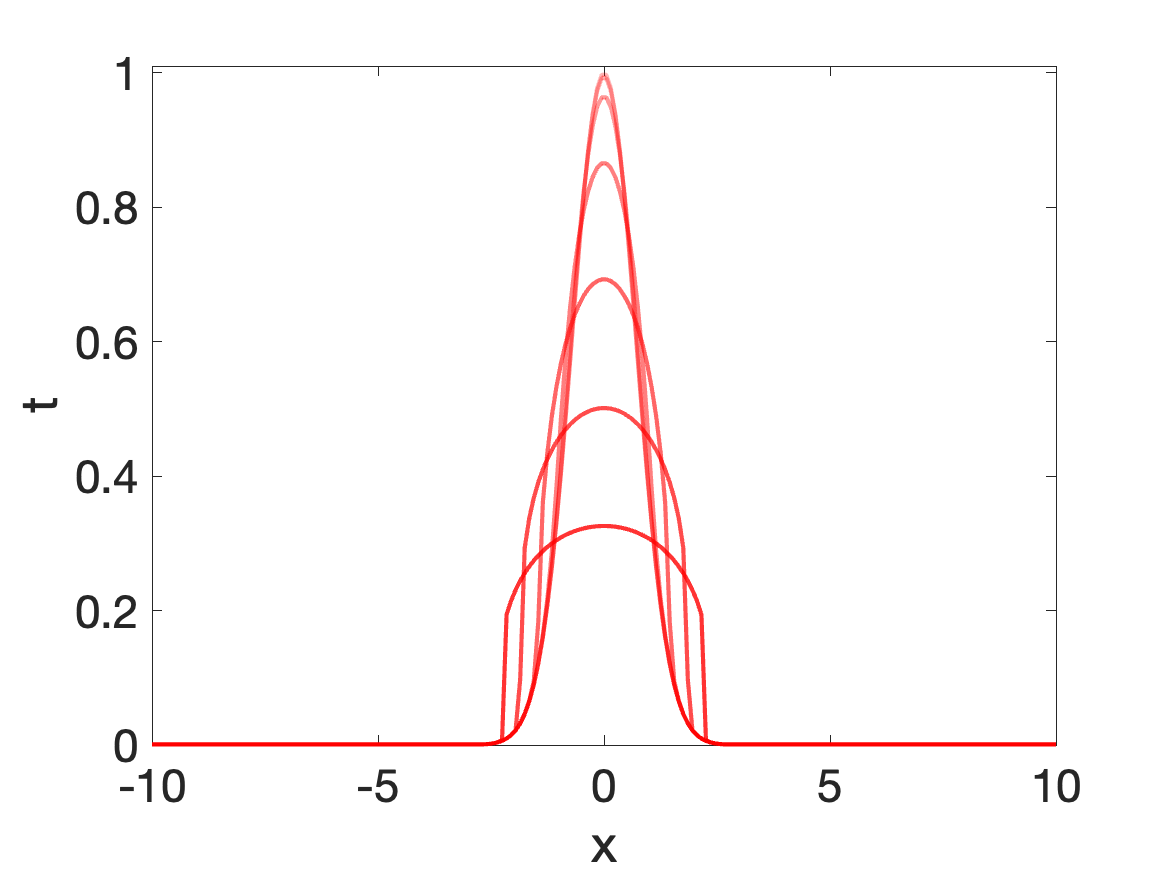} \\
\caption{
Counterpart of Figure \ref{bump2}
for model (\ref{biofilm_orig_x}).
We keep the same choices except the time steps
$dt = 5 \cdot 10^{-4}, 1.25 \cdot 10^{-4}, 5 \cdot 10^{-4}$,
for (a)-(d), (b)-(e), (c)-(f), respectively.
}
\label{bump3}
\end{figure}

Models (\ref{biofilm}) and  (\ref{biofilm_x}) also admit any constant as a solution.
For the same three choices of $R(t)$ considered in Figures \ref{bump2} 
and \ref{bump3},
constant profiles and wavy perturbations of these profiles remained stable during the
simulation time, as expected from the theoretical results established in this paper
for the non-autonomous $f$-model.

%However, they grow in size and break the profile when  $R(t)=R_s(t)$.
%\textcolor{magenta}{Aquí hay una cuestión. Este caso lo presentáis asumiendo que  $K/R_0 =1$.
%Tipicamente, $R_0$ no pinta nada, se lo come la R como valor inicial. Queda $K$ que es de orden
%$10^{-5}$. Si escalo para hacerlo, cambia el tiempo y la exponencial de la ecuación pasa a ser
%$e^{3 t/K}\sim e^{30^5 t}$ en el nuevo tiempo. Hecho eso, mirando las cuentas parece que sigue siendo $\alpha = -3$ el valor crítico y no $ \alpha = -30^5 $. Pero las simulaciones de singularidades
%que hayáis hecho con el cambio van en una escala temporal muy distinta a la que yo estoy
%simulando. Vosotros trabajáis en el tiempo $30^5 t$ con $K=1$ y yo en el tiempo $t$. Debería
%pasar un tiempo inasumible en mis simulaciones o usar unos pasos inasumiblemente pequeños
%por cuestiones de estabilidad de los esquemas para llegar ahí. No son inestabilidades que yo
%pueda ver.}

The autonomous $f-$model for $R(t) = e^{-3 t}$ has features that might lead to instability, as it follows
from a linear stability analysis about constant  profiles in the original model (\ref{biofilm_x}). Let us linealize (\ref{biofilm_x})  about a constant $c$: $h= c + \hat h$.  Neglecting quadratic terms, we find
% $h_t = \hat h_t$, $h_x = \hat h_x$, $h_{xt} = \hat_{xt}$, R=e^{-3t}, R' =-3 e^{-3t}
% -1+3-3/2 = 2-3/2= 1/2
\[
\hat h_t - {K \over R_0} c^3 \hat h_{xxt} + {K\over 2 R_0} c^3 \hat h_{xx} - \delta^2 e^{2t }c^2 h_{xx} = 0.
\]
We seek plane wave solutions $\hat h = e^{\imath \alpha x + \beta t}$. Inserting this expression in
the linearized equation we obtain
\begin{eqnarray*}
\beta + {c^3 K \over R_0} \alpha^2 \beta - {c^3 K\over 2 R_0} \alpha^2 + \delta^2 c^2 \alpha ^2 = 0  
\implies  \beta =  {{c^3 K\over 2 R_0} \alpha^2 - \delta^2 e^{2t } c^2 \alpha ^2 \over
1 + {c^3 K \over R_0} \alpha^2 }.
\end{eqnarray*}

We distinguish three cases depending on the value of $\delta$. If $\delta = 0$, the real part of $\beta$ is always positive and constant profiles are unstable. In this case, there is a possibility of film rupture in finite time. On the other hand, if $\delta > 0$, such unstable solutions do not appear. Furthermore, if $\delta$ decays in time, for instance as $\delta(t) = e^{-t}$, then some small constant profiles may become stable over time.

\begin{figure}[!hbt]
\includegraphics[scale=0.42]{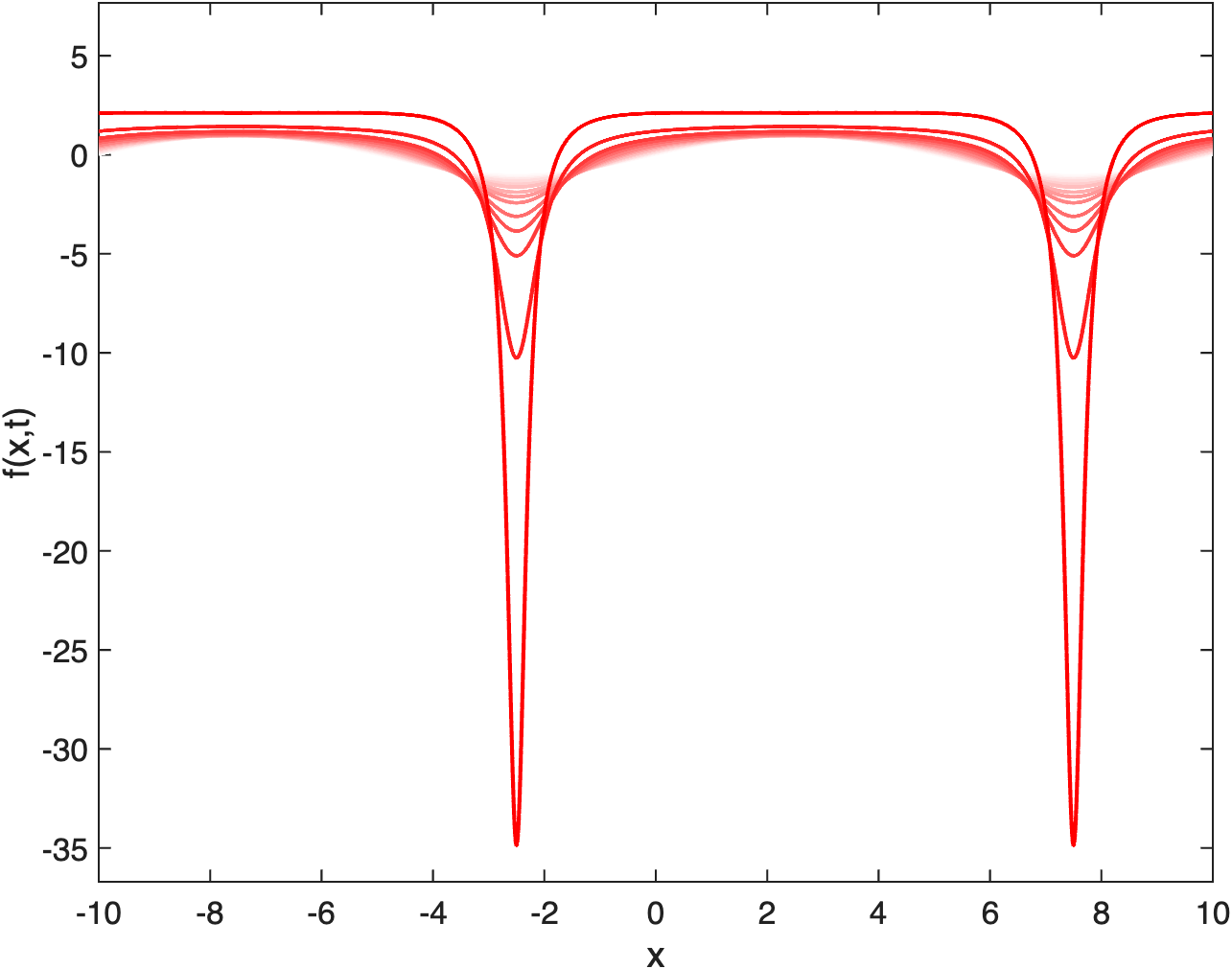}
\caption{Snapshots of the solution $f(x,t)$ of \eqref{model:aut}
at different times, illustrating the temporal evolution of the model. Initially, the solution is smooth and close to the initial condition. As time progresses, sharp negative peaks emerge symmetrically, growing rapidly in amplitude and suggesting the development of a finite-time singularity.
}
\label{singular}
\end{figure}

To support the instability observed in the case $\delta = 0$, we performed numerical simulations of~\eqref{model:aut}. We used a Fourier collocation scheme to discretize the spatial domain, and integrated in time using the classical adaptive Runge--Kutta method with adaptative step of order 4 and 5. In particular, with the initial data $f(x,0) = \sin(2x)$
and $N = 2^{14}$ spatial nodes, we observe that the system appears to be unstable, in agreement with the theoretical prediction for $\delta = 0$, see Figure \ref{singular}. Let us emphasize that the initial interface is $1+ \varepsilon f$. In particular, when $f(x)<-\frac{1}{\varepsilon}$ the asymptotic models lacks physical meaning. Although this behavior suggests the possibility of a finite time singularity at $t=0.35$, the numerics are not strong enough to claim that there is a numerical evidence of such behavior.

%%%%%%%%%%%%%%%
%%%%%%%%%%%%%%%


\begin{thebibliography}{10}
	\expandafter\ifx\csname url\endcsname\relax
	\def\url#1{\texttt{#1}}\fi
	\expandafter\ifx\csname urlprefix\endcsname\relax\def\urlprefix{URL }\fi
	\expandafter\ifx\csname href\endcsname\relax
	\def\href#1#2{#2} \def\path#1{#1}\fi
	
\bibitem{abels2012} H. Abels, Pseudo-differential and Singular Integral Operators: An Introduction with Applications, De Gruyter, Berlin, Boston, (2012).

\bibitem{alonso2021} 
D. Alonso-Orán, Asymptotic shallow models arising in magnetohydrodynamics, Water Waves, 3, 371--398, (2021).

\bibitem{alonso2024} D. Alonso-Orán, A. Durán, R. Granero-Belinchón, Derivation and well-posedness for asymptotic models of cold plasmas, Nonlinear Analysis, 244, (2024).

\bibitem{ambrose} D. M, Ambrose,
Existence theory for magma equations in dimension two and higher, Nonlinearity 31 4724 (2018).

\bibitem{bertozzi} A. Bertozzi, M. Pugh, The lubrication approximation for thin viscous films: the moving contact line with a porous media cut off of Van der Waals interactions, Nonlinearity  7, 1535--1564, (1994).

\bibitem{amm} A. Carpio, G. Duro, Well posedness of fluid-solid mixture models for biofilm spread, Appl. Math. Model. 124, 64--85, (2023).

\bibitem{na}  A. Carpio, G. Duro, Analysis of a two phase flow model of biofilm spread, Nonlinear Analysis 224, 113538, (2024).

\bibitem{ccp} A. Carpio, R. González-Albaladejo, Immersed boundary approach to biofilm spread on surfaces, Commun.   Comput.  Phys. 31(1), 257--292, (2022).

\bibitem{entropy} A. Carpio, E. Cebrián, Incorporating cellular stochasticity in solid-fluid mixture biofilm models,  Entropy 22(2), 188, (2020).

\bibitem{cheng2019} A. Cheng, R. Granero-Belinchón, S. Shkoller, J. Wilkening, Rigorous asymptotic models of water waves, Water Waves, 1, 7--130, (2019).

\bibitem{cogan} N. G. Cogan, R. D. Guy, Multiphase flow models of biogels from crawling cells to bacterial biofilms,
HFSP J. 4, 11-25 (2010).

\bibitem{degennes}  P. G. de Gennes, Wetting: statics and dynamics, Rev. Mod. Phys., 57(3), 827-863,
(1985).

\bibitem{stone} K. Drescher, Y. Shen, B. L. Bassler,  H. A. Stone, Proc. Natl. Acad. Sci. USA 110, 4345 (2013).

\bibitem{espeso} D. R. Espeso, A. Carpio, B. Einarsson, Differential growth of wrinkled biofilms, Phys. Rev. 91, 022710 (2015).

\bibitem{flemming}  H.C. Flemming, J. Wingender, The biofilm matrix, Nat. Rev. Microbiol. 8, 623-633, (2010).

\bibitem{granero2019} R. Granero-Belinchón, S. Scrobogna, 
Asymptotic models for free boundary flow in porous media, Physica D, 392, pp.1--16, (2019).

\bibitem{allen} M. A. A. Grant, B. Waclaw, R. J. Allen,  P. Cicuta, The role of mechanical forces in the planar-to-bulk transition in growing Escherichia coli microcolonies, J. R. Soc. Interface 11, 20140400 (2014).

\bibitem{kapellos} G. E. Kapellos, T. S. Alexiou, and A. C. Payatakes, Hierarchical simulator of biofilm growth and 
dynamics in granular porous materials, Adv. Water Resour. 30, 1648-1667 (2007).

\bibitem{lannes} D. Lannes, The water waves problem, Mathematical Surveys and Monographs, Vol. 188, Amer. Math. Soc., Providence, RI, (2013).

\bibitem{laspidou} C. S. Laspidou, B. E. Rittmann, Modeling the development of biofilm density including active bacteria, inert biomass, and extracellular polymeric substances, Water Res. 38, 3349-3361 (2004).

\bibitem{majda2001} A. Majda, L. Bertozzi, Vorticity and Incompressible Flow, Cambridge Texts in Applied Mathematics, Cambridge University Press, Cambridge, (2001).

\bibitem{oron} A. Oron, S. H. Davis, S. G. Bankoff, Long-scale evolution of thin liquid films, Rev. Mod. Phys. 69, 931-980, (1997).

\bibitem{seminara} A. Seminara, T. E. Angelini, J. N. Wilking, H. Vlamakis, S. Ebrahim, R. Kolter, D. A. Weitz,  M. P. Brenner, Proc. Natl. Acad. Sci. USA 109, 1116 (2012).

\bibitem{SteinDiff} E. Stein, Singular Integrals and Differentiability Properties of Functions (PMS-30), Princeton University Press, (1970).

\bibitem{storck} T. Storck, C. Picioreanu, B. Virdis, D. J. Batstone, Variable cell morphology approach for individual-based modeling of microbial communities, Biophys. J. 106, 2037 (2014).

\end{thebibliography}
\end{document}